\newtheorem{teorema}{Teorem}[section]
\newtheorem{proposicion}[teorema]{Proposition}
\newtheorem{comentario}[teorema]{Remark}
\newtheorem{example}[teorema]{Example}
\newtheorem{defi}[teorema]{Definition}
\newtheorem*{remark}{Remark}
\title{Schmüdgen's theorem and results of positivity}
\author{Christoph Schulze}
\date{}
\begin{document} 

\begin{textblock*}{15cm}(3.2cm,1.13\textheight)
\scriptsize
\noindent\rule{\textwidth}{0.1pt}
This paper is an english translation of a final paper I made during my Erasmus semester (Febuary to July 2014) at Universidad Complutense de Madrid with some little changes and corrections and a remark at the end. I want to thank Antonio Díaz-Cano Ocaña for the supervision of this work and his helping comments.
\end{textblock*}

\maketitle

\huge \textbf{Abstract}\\

\normalsize
The object of this final paper is the presentation of the theorem of Schmüdgen and other results, which are certificates of positivity on basic semialgebraic sets. Schmüdgen's result from 1991 leads to presentations without denominators for positive polynomials, when the semialgebraic set is compact. So our top priority is to show the theorem of Schmüdgen and present similar results in more general cases, especially when the set is not compact or when we are working in projective spaces.\\
\indent In contrast to presentations in the literature our approach differs in two respects. The first difference is that we emphasize the relationship to the corresponding homogeneous or projective problem, which will lead us to some generalizations of the theorems of Schmüdgen and Putinar. The second difference is, that we want to present our results in a self-contained manner using only a minimum of algebraic structure. Of course, this leads on one hand to proofs, which apply only in special geometric situations, but with our inductive property we get a good geometric presentation, which at the same time generalizes easily to our projective proofs.\\
\indent The paper is organized as follows. In the first part we present, after a short historical introduction, the basic notation together with fundamental relations between the affine and homogeneous and projective problems. This is followed in the second part by some basic elementary results, which will be necessary for our proof of the theorem of Putinar, but which are also interesting Positivstellensätze. The third part begins with the mentioned inductive property, which leads us directly to the theorem of Putinar. We state some theorems to illustrate when the condition of the Putinar's theorem holds and state our first projective result. Up to this point our work is self-contained. The need for stronger tools is caused by the existence of zeros. So we need the Positivstellensatz from Krivine and Stengle to get, together with our prior work, a proof of the theorem of Schmüdgen. To obtain a homogenous version of Schmüdgen's theorem we use a proof originally given by Wörmann together with a generalization of the archimedean property and a homogeneous Positivstellensatz from Zeng. With the inductive property we get at the end of part three a projective theorem of Putinar. Our last chapter deals with the non-compact case. We present the result in dimension 1 and the negative result in dimension 3, a stability theorem of Netzer and some positive examples of Marshall generalized by Nguyen and Powers and new examples from Vuy and Toan.\\
\indent All in all we get a detailed presentation of the theorems of Putinar and Schmüdgen with generalizations to the projective and homogeneous cases and we present the most important results that have been proved so far in the non-compact case.\\

\noindent \textit{Keywords:} Real algebraic geometry, Positivstellensatz, homogeneous Positivstellensatz

\newpage

\section{Introduction}

\subsection{History}

A problem with certificates of positivity already appears in the problems of Hilbert of 1900. Problem 17 asks if every non-negative polynomial is a quotient of sum of squares of polynomials. This problem has a positive answer, given by Artin in 1927. Tarski developed his work and proved in 1948 his transfer principle which is a very strong tool and which has been used by Krivine in 1964 and Stengle in 1974 to show their Positivstellensatz for basic semialgebraic sets.\\
Up to 1991 the most important Positivstellensätze had been using denominators (or factors which is the same). So the theorem of Schmüdgen, who worked on the moment problem and obtained in his investigations a Positivstellensatz, was a surprise. In this theorem we have a compact set in $\mathbb{R}^n$ described by polynomials of $\mathbb{R}[x_1,\dots,x_n]$, namely, the compact set is the set of points where each polynomial is non-negative. The proposition is that each strict positive polynomial on the set is a combination of sums and products of squares of polynomials and the polynomials which describe our set. Clearly it is a certificate of non-negativity on the set and it does not need denominators. Soon afterwards Putinar asked how many products are necessary in the presentation and this led to further investigations.\\
Another question is, whether one can generalize the theorem to non-compact sets. In this case we have so far only an answer in dimension $1$ and a negative answer in dimension $3$ or higher. In dimension $2$ there are known some positive results and negative results, but no complete answer. As in Schmüdgen's theorem, investigations so far seem strongly connected to the moment problem, but questions concerning effective calculation methods of representations are also important because of connections with optimization.\\
Positivity questions for polynomials have a long history and new results with connetions to other parts of mathematics continue to be found.

\subsection{Affine, homogeneous and projective problems, change of coordinates and moment problem}

In this part we will present the most important notations of this work with the connections between them.\\
We will work in a ring of polynomials over $\mathbb{R}$, we fix $n$ for the number of variables and we will use the abbreviation $\mathbb{R}[X]:=\mathbb{R}[x_1,\dots,x_n]$. We also need sums of squares of polynomials, which we will notate by $\sum \mathbb{R}[X]^2$.\\
We will use the words positive, non-negative, negative and non-positive every time for the relations $>0$, $\geq 0$, $<0$ and $\leq 0$ which shall be fulfilled on the corresponding set and when we do not desribe a certain set we require, that they are fulfilled on the whole space. \\
Let $S=\left\{ g_1, \dots, g_s \right\}\subset\mathbb{R}\left[X\right]$ be a finite set of polynomials (we also fix $s$ for the number of elements of $S$). We name
\begin{itemize}
\item[] $K_S:=\left\{x \in \mathbb{R}^n | g_i(x)\geq 0, i=1,\dots , s\right\}$ semialgebraic set defined by $S$
\item[] $M_S:=\left\{\sum_{i=0}^{s} \sigma_i g_i | \sigma_i \in \sum \mathbb{R}[X]^2, g_0:=1 \right\}$ quadratic module defined by $S$
\item[] and $T_S=\left\{\sum_{\alpha\in \left\{0,1\right\}^s} \sigma_{\alpha} g^{\alpha} | \sigma_{\alpha} \in \sum \mathbb{R}[X]^2\right\}$ preorden defined by $S$. 
\end{itemize}
Clearly polynomials of $M_S$ and $T_S$ are non-negative on $K_S$. Schmüdgen's theorem shows that we have representations with element of $T_S$ and in the theorem of Putinar we achieve representations with elements of $M_S$.\\

We will also need notations in the homogeneous and projective case. We use sometimes $\mathbb{R}[X]=\mathbb{R}[x_1,\dots,x_n]$ for homogeneous problems, when our object is treating infinity in the affine problem, but in general we use the notation $\mathbb{R}[X_0]=\mathbb{R}[x_0,x_1,\dots,x_n]$. We claim that in the sums of squares only appear squares of (homogenous) polynomials of the same degree and we write $\sum \mathbb{R}[X]^2$ or $\sum \mathbb{R}[X_0]^2$.
Let $S=\left\{ g_1, \dots, g_s \right\}\subset\mathbb{R}\left[X_0\right]$ be a finite set of homogeneous polynomials. We name
\begin{itemize}
\item[] $K_S^h:=\left\{x \in \mathbb{R}^{n+1}\backslash \left\{0\right\} | g_i(x)\geq 0, i=1,\dots , s\right\}$ semialgebraic set defined by $S$
\item[] $M_S^h:=\left\{\sum_{i=0}^{s} \sigma_i g_i | \sigma_i \in \sum \mathbb{R}[X_0]^2_h, g_0:=1, deg(\sigma_i g_i)=deg(\sigma_j g_j), i,j\in \left\{0,\dots,s\right\}\right\}$\\ homogeneous quadratic module defined by $S$
\item[] y $T_S^h=\left\{\sum_{\alpha\in \left\{0,1\right\}^s} \sigma_{\alpha} g^{\alpha} | \sigma_{\alpha} \in \sum \mathbb{R}[X_0]^2_h, deg(\sigma_{\alpha} g^{\alpha})=deg(\sigma_{\beta} g^{\beta}), \alpha, \beta\in \left\{0,1\right\}^s\right\}$\\ homogeneous preorden defined by $S$.
\end{itemize}
The homogeneous semialgebraic set is, except the origin, the same like in the affine case and the elements of $M_S^h$ and $T_S^h$ are non-negative on $K_S^h$.\\
The projective case is a special case of the homogeneous case. We claim that polynomials of $S$ (are homogenous and) have even degree. For homogeneous polynomials of even degree it is well defined to speak of positivity (non-negativity, \dots) of the polynomial in the projective space, since the factor of the equivalence class is a square (points, except of the origin, of a straight line through the origin of $\mathbb{R}^{n+1}$ form an equivalence class and these are the points of $\mathbb{R}P^n$). Another sort of imagination of the projective space is the unit sphere with opposite points identified. In this case it makes sense to interpret polynomials as functions and we will use the topology which is induced of it.\\ 
To achieve a formal definition of positivity, we introduce again semialgebraic sets. We name
\begin{itemize}
\item[] $K_S^p:=\left\{x \in \mathbb{R}P^n | g_i(x)\geq 0, i=1,\dots , s\right\}$\\ projective semialgebraic set defined by $S$.
\end{itemize}

Homogenization not behave well with representations with squares when we have polynomials of odd degree. So we use homogenization to even degree to transform affine problems in projective problems (we use the notation $^p$) and dehomogenization to transform projective problems in affine problems. Affine representations correspond to a equivalence class of projective representations. Nevertherless the concept of positivity not behave well in infinity. We will demonstrate it with an example:

\begin{example}
We consider $S=\left\{x_1, 1-x_1^2-x_2^2\right\}$. Because of Putinar's theorem we get for every $f>0$ on the set $K_S$ sums of squares $\sigma_0, \sigma_1, \sigma_2$ such that $f=\sigma_0 +\sigma_1 x_1+ \sigma_2 (1-x_1^2-x_2^2)$.\\
When we are homogenizing $S$ at even degree we have $S^p=\left\{x_0 x_1, x_0^2-x_1^2-x_2^2\right\}$ and Putinar's theorem in affine language tells us that for every homogeneous polynomial $f$ of even degree, which is positive on $K_{S^p}^p$, exist $k, k_0, k_1, k_2\in \mathbb{N}_0$ such that $x_0^{2k}f=x_0^{2k_0}\sigma_0^p+x_0^{2k_1}\sigma_1^p x_0 x_1+x_0^{2k_2}\sigma_2^p (x_0^2-x_1^2-x_2^2)$. We can claim that one exponent of $x_0$ equals $0$ but we can also speak of equivalence classes of representations respective to $x_0$.\\
Considering $S=\left\{x_1, x_2, 1-x_1-x_2\right\}$ we have $S^p=\left\{x_0 x_1, x_0 x_2, x_0^2-x_0x_1-x_0x_2\right\}$ and the set $K_S^p$ contains $\left\{(x_0:x_1:x_2)\in \mathbb{R}P^2|x_0=0\right\}$. Otherwise $T_S$ contains because of to Schmüdgen's theorem $2-x_1^2-x_2^2$ with the homogenization $2x_0^2-x_1^2-x_2^2$, which is negative on the set described by $x_0=0$.\\ In the case when we translate projective Positivstellensätze to affine theorems, we obtain theorems, which not have to hold for all polynomials which are positive on the corresponding affine set $K_S$, when we have a condition on infinity (especially for polynomials of odd degree).
\end{example}

Spaces appear every time with their symmetries. For example Schmüdgen's theorem is invariant to affine transformations. If one translates Schmüdgen's (or Putinar's) theorem to projective language it is not invariant to changes of coordinates in projective space. If one homogenizes to even degree with $x_0$ and dehomogenizes with $x_1$ one introduces denominators of $x_0$ and if $x_1$ does not divide $f$ the degree of the right side of our equation can not be bigger than the degree of $f$ with the potency of $x_0$. In part $3$ we will show a homogeneous version of these theorems.\\

Now we have introduced the most important concepts and we present the moment problem.
The moment problem asks, if for every linear functional $L:\mathbb{R}[X]\rightarrow \mathbb{R}$ with $L(1)=1$ and a closed set $K$ exist a (positive) Borel measure $\mu$ which support is contained in $K$ and which fulfills $L(f)=\int_K f d\mu$ for all $f\in \mathbb{R}[X]$.
A theorem of Haviland tells us, that such a measure exists if and only if $L(f)\geq 0$ for all $f$ with $f\geq 0$ on $K$. Representations of positive polynomials (especially without denominators) are important to reduce the number of polynomials which have to fulfill that condition.\\
In the case of compact basic semialgebraic sets Schmüdgen's theorem shows that such representations exist. Another property for semialgebraic sets is the strong moment property (SMP) which has a semialgebraic set described by $S$, if for every polynomial $f$ non-negative on $K_S$ and every linear functional $L$, with $L(g)\geq 0$ for every $g\in M_S$, we have $L(f)\geq 0$. Presentations without denominators imply SMP such that if a set does not have SMP it can not hold, that every positive polynomial is in the quadratic module. So negative conditions for SMP lead to negative conditions of the question of this paper.

\subsection{Structure of this paper and more notations}

The object of this paper is to give a demonstration of Schmüdgen's theorem and present new results of positivity and non-negativity with some examples.\\
In the following part of this paper we will start with proofs of first theorems of positivity in a elementary manner. Nevertheless, these theorems are important Positivstellensätze with modern proofs and will be the base of our proof of Putinar's theorem in part $3$ of this paper.\\
In that part we will continue to make our proofs most elementary and we will show in that way Putinar's theorem and a projective version. The most important proposition of that part will be the inductive property. It is followed by a proof of Schmüdgen's theorem, for which we will need the Positivstellensatz of Krivine and Stengle, which we will claim in this paper. A generalization of the Positivstellensatz of Krivine and Stengle to a homogeneous Positivstellensatz of Zeng permits us to show a homogeneous version of Schmüdgen's theorem together with a projective version of Putinar's theorem.\\ At the end we will come to the fourth part, which treats the non-compact case. There we will mention some results in dimension $1$ and dimension $3$ or higher dimensions and we will describe the most important known examples in dimension $2$ with some examples.\\

In this work we will use multi-index notation with its operations to make the notation easier. We use $X$ and $X_0$ also for the vectors $(x_1,\dots,x_n)$ and $(x_0,\dots,x_n)$ or the arguments of a polynomial and so $X_0\geq 0$ is the semialgebraic set defined by $x_0\geq 0, \dots, x_n\geq 0$. When we describe semialgebraic sets, we will normally use $x$ in the affine case and $X_0$ in the homogeneous and projective case as variable in the arguments. When we speak of a homogeneous identity we want to say, that all polynomials which appear are homogeneous and if we calculate both sides of the equation we never add polynomials of different degrees like in the representations of $M_S^h$ and $T_S^h$. In part 4 we use $x$ instead of $x_1$ in the case $n=1$ and $x$ and $y$ instead of $x_1$ and $x_2$ in the case $n=2$. Sums which appear will be every time finite sums.

\newpage

\section{Pólya's theorem and applications}

\subsection{Pólya's theorem}

Pólya's theorem of $1928$ is one of the first most important theorems about positive polynomials on a set and has elementary proofs. One can find a proof in the paper "`An effective version of Pólya's theorem on positive definite forms"' by Jesús A. de Loera and Francisco Santos \cite{LS}.

\begin{teorema}{\textbf{(Pólya)}}\\
Let $f\in \mathbb{R}[X_0]$ be a homogeneous polynomial of degree $d$ which is positive on the first octant $\left\{X_0|X_0\geq 0\right\}$ except on the origin. Thus there is a $N_0\in \mathbb{N}$ such that in the homogeneous polynomial $(x_0+\dots+x_n)^N f$ every monomial of degree $d+N$ has positive coefficient for $N\geq N_0$.
\end{teorema}

\subsection{Habicht's theorem}

In 1940 Habicht proved in his paper "`Über die Zerlegung strikte definiter Formen in Quadrate"' \cite{H} with Polya's theorem a theorem which is a special case of the 17th problem of Hilbert. It says that each homogeneous polynomial, which is positive except on the origin, one can write as a quotient of two sums of squares of homogeneous polynomials. Following his work we show a bit stronger version. In the article \cite{LS} one can find a more general version.

\begin{teorema}{\textbf{(Habicht)}}\\
Let $f\in \mathbb{R}[X_0]$ be a homogeneous polynomial of degree $2d$, which is positive except on the origin (=positive on $\mathbb{R}P^n$). Thus exist sum of squares $M_1$, $R_1$, $M_2$ and $R_2$, which are positive except on the origen, such that $M_1$ and $M_2$ are sums of squares of monomials and $(M_2+R_2) f=M_1+R_1$.
\end{teorema}

\begin{proof}
First we notice that because of Pólya's theorem every homogeneous polynomial, which only depends on $x_0^2, \dots, x_n^2$ and which is positive except on the origin one can multiply with a potency of $(x_0^2+\dots+x_n^2)$, such that in the product only appear monomials, which are squares with positive coefficients.\\
We consider the set of signs $\tau=(\tau_0,\dots, \tau_n) \in \left\{-1,+1\right\}^{n+1}$ and the polynomials $f_{\tau}:=f(\tau_0 x_0,\dots, \tau_n x_n)$, which are also positive except on the origin. Let $s_1,\dots, s_{2^{n+1}}$ be the elementary symmetric polynomials of the polynomials $f_{\tau}$ with $\tau \in \left\{-1,+1\right\}^{n+1}$, such that $s_i$ has degree $2di$ ($\forall i\in \left\{1,\dots, 2^{n+1}\right\}$). Because of their definition $s_1,\dots, s_{2^{n+1}}$ are polynomials which do not change, when we change a sign of a variable. So they only depend on $x_0^2, \dots, x_n^2$.\\
When we apply Polya's theorem like mentioned at the same time (and multiplying with $x_0^2+\dots+x_n^2$ some times more for the degree) we obtain sums of squares of monomials $\sigma_1,\dots, \sigma_{2^{n+1}}$, such that $\sigma_i$ corresponds to $s_i$ and such that there is a $D$ with $\sigma_i$ has degree $2Di$ ($\forall i\in \left\{1,\dots, 2^{n+1}\right\}$).\\
Let $\tilde{f}:=(x_0^2+\dots+x_n^2)^{D-d} f$. With Vieta's theorem we have the equation $$\tilde{f}^{2^{n+1}}+\sum_{i=1}^{2^{n+1}} (-1)^{i}\sigma_i \tilde{f}^{2^{n+1}-i}=0.$$
If we write summands of same sign on the same side we obtain $\tilde{f}^{2^{n+1}}+\sum_{i=1}^{2^n} \sigma_{2i} \tilde{f}^{2^{n+1}-2i}=\tilde{f} \sum_{i=1}^{2^n} \sigma_{2i-1} \tilde{f}^{2^{n+1}-2i}$ and thus $$f= \frac{\tilde{f}}{(x_0^2+\dots+x_n^2)^{D-d}}=\frac{\tilde{f}^{2^{n+1}}+\sum_{i=1}^{2^n} \sigma_{2i} \tilde{f}^{2^{n+1}-2i}}{(x_0^2+\dots+x_n^2)^{D-d} \sum_{i=1}^{2^n} \sigma_{2i-1} \tilde{f}^{2^{n+1}-2i}}.$$
We obtain the proposition with $M_1=\sigma_{2^{n+1}}$, $M_2=(x_0^2+\dots+x_n^2)^{D-d}\sigma_{2^{n+1}-1}$, $R_1=\tilde{f}^{2^{n+1}}+\sum_{i=1}^{2^n-1} \sigma_{2i} \tilde{f}^{2^{n+1}-2i}$ and $R_2=(x_0^2+\dots+x_n^2)^{D-d} \sum_{i=1}^{2^n-1} \sigma_{2i-1} \tilde{f}^{2^{n+1}-2i}$.
\end{proof}

\subsection{Linear polynomials}
One can also use Pólya's theorem to show that positive polynomials on simplices one can write with the polynomials which describe the faces. We will use the proof of "`A new bound for Pólya's Theorem with applications to polynomials positive on polyhedra"' \cite{PR} of V.Powers and B.Reznick.

\begin{proposicion}\label{lin}
Let $S_n$ be a $n$-simplex, which we describe with linear polynomials $\lambda_0,\dots,\lambda_n\in \mathbb{R}[X]$ as semialgebraic set. Thus exist for every $f$, which is positive on $S_n$, a finite set of numbers $\left\{a_{\alpha}\in \mathbb{R}^+|\alpha \in \mathbb{N}^{n+1}\right\}$ such that $f=\sum_{\alpha} a_{\alpha}\lambda^{\alpha}$ and thus $f\in T_S$.
\end{proposicion}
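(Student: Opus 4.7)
The plan is to reduce the statement to P\'olya's theorem by passing to barycentric coordinates on the simplex. First I would normalize the defining linear forms. The $n{+}1$ affine functions $\lambda_0,\dots,\lambda_n$ on $\mathbb{R}^n$ must satisfy a linear relation $\sum_i c_i \lambda_i = c$ (the space of affine functions on $\mathbb{R}^n$ has dimension $n{+}1$), and for $K_S$ to be a bounded simplex all the $c_i$ and $c$ must be non-zero of the same sign. After replacing each $\lambda_i$ with a positive scalar multiple, I can therefore assume $\sum_{i=0}^{n} \lambda_i = 1$; this rescaling only multiplies each monomial $\lambda^\alpha$ by a positive constant, so it affects neither the existence of a representation with positive coefficients $a_\alpha$ nor membership in $T_S$.

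Next I would build a homogeneous model. The map $\Phi(x)=(\lambda_0(x),\dots,\lambda_n(x))$ is an affine bijection from $\mathbb{R}^n$ onto the hyperplane $H=\{y\in\mathbb{R}^{n+1}\mid\sum_i y_i=1\}$, so there is a polynomial $g(y_0,\dots,y_n)$ of degree $d:=\deg f$ with $f(x)=g(\Phi(x))$. Homogenize $g$ to degree $d$ by replacing each monomial $c_\beta y^\beta$, $|\beta|\le d$, with $c_\beta y^\beta(y_0+\dots+y_n)^{d-|\beta|}$; call the result $F$. Since $\sum_i y_i = 1$ on $H$, we have $F|_H=g|_H$, and in particular $F(\Phi(x))=f(x)$ for every $x\in\mathbb{R}^n$. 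Because $\Phi(K_S)$ is the standard simplex $\{y\ge 0,\ \sum_i y_i=1\}$ and $f$ is positive there, $F$ is positive on that simplex; by homogeneity $F(ty)=t^dF(y)$ this extends to the whole first octant $\{y\ge 0\}\setminus\{0\}$.

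Now P\'olya's theorem applies to $F$: there exists $N\in\mathbb{N}$ and positive coefficients $a_\alpha>0$ with
\[(y_0+\dots+y_n)^N F(y_0,\dots,y_n) \;=\; \sum_{\alpha} a_\alpha\, y^\alpha.\]
Substituting $y_i=\lambda_i(x)$ and using $\sum_i\lambda_i(x)=1$, the left-hand side becomes $F(\Phi(x))=f(x)$, yielding $f=\sum_\alpha a_\alpha \lambda^\alpha$ with $a_\alpha>0$. To see $f\in T_S$, I would write each exponent as $\alpha=2\gamma+\epsilon$ with $\epsilon\in\{0,1\}^{n+1}$, so that $a_\alpha\lambda^\alpha=(\sqrt{a_\alpha}\,\lambda^\gamma)^2\lambda^\epsilon$; grouping by $\epsilon$ gives $f=\sum_{\epsilon\in\{0,1\}^{n+1}}\sigma_\epsilon\lambda^\epsilon$ with each $\sigma_\epsilon\in\sum\mathbb{R}[X]^2$, which is the definition of $T_S$. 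The only real content is P\'olya's theorem itself; the main obstacle is the setup, namely the bookkeeping needed to translate the positivity of $f$ on the simplex into positivity of a homogeneous form on the octant via the normalization $\sum\lambda_i=1$.
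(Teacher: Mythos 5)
Your proof is correct and follows essentially the same route as the paper: both reduce the statement to P\'olya's theorem by homogenizing $f$ and applying the theorem in coordinates adapted to the simplex's facets, then dehomogenizing. The only cosmetic differences are that you normalize a general family of facet forms to $\sum_i\lambda_i=1$ rather than first treating the standard simplex and then invoking affine invariance, and you spell out the final step $f\in T_S$ (via the decomposition $\alpha=2\gamma+\epsilon$) that the paper leaves implicit.
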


\begin{proof}
First we consider the special case $\lambda_0=1-x_1-\dots-x_n, \lambda_1=x_1,\dots, \lambda_n=x_n$ with $f$ positive on the corresponding semialgebraic set and we homogenize $f$ and $\lambda_0$ with a new variable $x_0$ (not to even degree). We imagine in Pólya's theorem the coordinates such that $x_0$ is the factor and $x_0-x_1-\dots-x_n,x_1,\dots,x_n$ are the previous variables. Thus the conditions are exact the same as in Pólya's theorem and we obtain a corresponding equation. If one dehomogenize with $x_0$ one obtain a equation like in the theorem.\\
The general case we obtain with the observation that the theorem is invariant under affine transformations and because every $n$-simplex is an affine transformation of every other $n$-simplex and positive factors in the lambdas do not change the theorem.
\end{proof}

\newpage

\section{Schmüdgen's and Putinar's theorem}
\subsection{Inductive property}

Let $S=\left\{ g_1, \dots, g_s \right\}\subset\mathbb{R}\left[X\right]$ be a finite set of polynomials. We call $S$ or $M_S$ \textit{Putinar}, if every polynomial, which is positive on the semialgebraic set $K_S$, is in the quadratic module $M_S$ defined by $S$. The definition does not depend on $S$ which describes $M_S$ because $K_S=\left\{x \in \mathbb{R}^n | g(x)\geq 0, g\in M_S\right\}$.

\begin{proposicion}\label{induct}\textbf{(Inductive property)}
Let $S=\left\{ g_1, \dots, g_s \right\}\subset\mathbb{R}\left[X\right]$ be a finite set of polynomials such that $K_S$ is compact and furthermore $g_{s+1}$ a polynomial. If $S$ is a set of polynomials which is Putinar, then $S':=S\cup \left\{ g_{s+1} \right\}$ is Putinar. 
\end{proposicion}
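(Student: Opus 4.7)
My plan is to reduce the claim for $S'$ to the hypothesised Putinar property of $S$ by constructing, for any $f$ strictly positive on $K_{S'}$, a sum of squares $\sigma$ such that $h := f - \sigma g_{s+1}$ is strictly positive on the larger compact set $K_S$. Granted this, the assumption applied to $S$ places $h$ in $M_S$, and then $f = h + \sigma g_{s+1}$ lies in $M_S + (\sum \mathbb{R}[X]^2)\, g_{s+1} \subseteq M_{S'}$, as required.

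For the selection of $\sigma$ I would first perform two normalisations using compactness of $K_S$. Replacing $g_{s+1}$ by $g_{s+1}/C$ for a suitable positive constant $C$ (which alters neither $K_{S'}$ nor $M_{S'}$, since $C$ is a positive SOS) I may assume $-1 \le g_{s+1} \le 1$ on $K_S$. Compactness of the closed subset $K_{S'}\subseteq K_S$ yields $\delta := \min_{K_{S'}} f > 0$, and a standard sequential argument (any sequence $x_n \in K_S$ with $g_{s+1}(x_n) \ge -1/n$ and $f(x_n) < \delta/2$ has a convergent subsequence whose limit lies in $K_{S'}$ and violates the definition of $\delta$) produces $\epsilon > 0$ such that $f \ge \delta/2$ throughout $K_S \cap \{g_{s+1} \ge -\epsilon\}$. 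Writing $M := \max_{K_S}|f|$, I then try the explicit ansatz
\begin{equation*}
\sigma := \lambda\,(1 - g_{s+1})^{2k} = \bigl(\sqrt{\lambda}\,(1 - g_{s+1})^k\bigr)^2,
\end{equation*}
which is a single square, hence a sum of squares, for any $\lambda \ge 0$.

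To verify $h > 0$ on $K_S$ I would split into three regimes according to $y := g_{s+1}(x)$. On $y \in [0,1]$, a short one-variable calculus shows $(1-y)^{2k} y \le 1/(2k+1)$, so $\sigma g_{s+1} \le \lambda/(2k+1)$ and positivity follows from $\lambda < \delta(2k+1)$. On $y \in [-\epsilon,0)$ the product $\sigma g_{s+1}$ is non-positive while $f \ge \delta/2$, so positivity is automatic. On $y \in [-1,-\epsilon]$ the function $(1-y)^{2k}|y|$ is monotone and attains its minimum $(1+\epsilon)^{2k}\epsilon$ at $y = -\epsilon$, so positivity follows from $\lambda\,(1+\epsilon)^{2k}\epsilon > M$.

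The main obstacle, and the real content of the argument, is to make the two resulting constraints on $\lambda$ simultaneously satisfiable, namely
\begin{equation*}
\frac{M}{(1+\epsilon)^{2k}\,\epsilon} \;<\; \lambda \;<\; \delta\,(2k+1).
\end{equation*}
The lower bound decays exponentially in $k$ while the upper bound grows without bound, so for every sufficiently large $k$ this open interval is non-empty; any choice of $\lambda$ therein together with the corresponding $\sigma$ then closes the proof via the Putinar hypothesis on $S$. The edge cases (where $g_{s+1}$ has constant sign on $K_S$, so that $K_{S'} = K_S$ or $K_{S'} = \emptyset$) can be dispatched separately by an even simpler variant of the same idea.
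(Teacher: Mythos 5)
Your proof is correct and follows the same high-level reduction as the paper — construct a sum of squares $\sigma$ so that $f-\sigma g_{s+1}>0$ on the larger compact set $K_S$, then invoke the Putinar hypothesis for $S$ — but the way you produce $\sigma$ is genuinely different. The paper first builds a \emph{continuous} weight $\tilde{\sigma}(x)=\min\bigl(M,\tfrac{f}{2g_{s+1}}(x)\bigr)$ for $g_{s+1}(x)>0$ and $\tilde{\sigma}(x)=M$ otherwise (with $M$ chosen via a bound on $f/g_{s+1}$ over $K_S\setminus K_{S'}$), verifies $f-\tilde{\sigma}g_{s+1}>0$ on $K_S$, and only then appeals to Stone--Weierstra\ss\ to replace $\sqrt{\tilde{\sigma}}$ by a polynomial $r$, taking $\sigma=r^2$. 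You instead bypass Stone--Weierstra\ss\ entirely with the explicit one-parameter family $\sigma=\lambda(1-g_{s+1})^{2k}$ and a three-regime estimate on $g_{s+1}\in[-1,1]$, closing by observing that the window $M/((1+\epsilon)^{2k}\epsilon)<\lambda<\delta(2k+1)$ opens for large $k$. Your version is more constructive (it yields, in principle, explicit degree bounds depending on $\delta$, $\epsilon$, $M$) and avoids the analytic approximation tool, at the price of the bookkeeping in the three-regime split and the normalisation step; the paper's version is shorter once one is willing to use Stone--Weierstra\ss. Both arguments show, as the paper remarks afterward, that a \emph{single} square suffices for the added generator. One cosmetic point: you should state explicitly that the normalising constant $C$ can be taken as $\max_{K_S}|g_{s+1}|+1$ (compactness), and that the case $g_{s+1}\equiv 0$ on $K_S$ falls under your first edge case.
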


\begin{proof} Let $S$ be like above and $f>0$ on $K_{S'}$. It is sufficient to show that exist a $\sigma_{s+1} \in \sum \mathbb{R}[X]^2$ such that $f-\sigma_{s+1} g_{s+1}>0$ on $K_{S}$ because $S$ is Putinar. Let $f$ be such that $f>0$ does not hold for every point in $K_{S}$ (else we choose $\sigma_{s+1}:=0$).\\
Let $M:=\max_{x \in K_{S}\backslash K_{S'}}\left\{\frac{f}{g_{s+1}}(x)\right\}+1>0$, which exists, because the closure of the set $D:=K_{S}\backslash K_{S'}$ is compact, $\frac{f}{g_{s+1}}$ goes to $-\infty$ near of $\bar{D}\backslash D$, when we come from the interior of $D$ (because $f>0$ and $g_{s+1}$ goes to $0$ from negatives) and the function is continuous in $D$.\\ Let $\tilde{\sigma}_{s+1}(x):=\min (M, \frac{f}{2g_{s+1}}(x))$ if $g_{s+1}(x)>0$ and $\tilde{\sigma}_{s+1}(x):=M$ if $g_{s+1}(x)\leq 0$. $\tilde{\sigma}_{s+1}$ is positive on the set $K_{S}$, continuous and $f-\tilde{\sigma}_{s+1} g_{s+1}>0$ on $K_{S}$. With the theorem of Stone-Weierstra\ss\ exist a $r \in \mathbb{R}[X]$ sufficiently close to $\sqrt{\tilde{\sigma}_{s+1}}$, such that $f-r^2 g_{s+1}>0$ on $K_{S}$ (because $g_{s+1}$ is bounded on $K_S$) and thus we have proved the theorem with $\sigma_{s+1}:=r^2$.\\
\end{proof}

\begin{comentario}
The proof shows that one square is sufficient. Analog one can demonstrate the inductive property for every adequate Algebra (of polynomials) and every positive potency (or adequate function), such that to demonstrate a theorem with a more general representation, it is sufficient to show it for one polynomial and apply the inductive property. Wörmann for example showed more general representations, which use potencies of the form $2(2k+1)$ \cite{M1}.
\end{comentario}

\begin{example}\label{esfera}
The set $S=\left\{N-x_1^2-\dots-x_n^2\right\}$ is Putinar for every $N>0$: It is sufficient to show it for the polynomial $1-x_1^2-\dots-x_n^2$ and the general case follows with a dilation. The equation $\frac{1}{2}\left((x_1-1)^2+x_2^2+\dots+x_n^2+(1-x_1^2-\dots-x_n^2)\right)=1-x_1$ shows that $M_S$ contains a tangent hyperplane to the unit sphere and with symmetry every tangent hyperplane. Let $\tilde{S}$ be a set of tangent hyperplanes which forms a simplex. With proposition \ref{lin} we know that $\tilde{S}$ is Putinar and as consequence of proposition \ref{induct}, $\tilde{S}\cup S$ is Putinar. But $\tilde{S}\subset M_S$, thus $M_S=M_{\tilde{S}\cup S}$ and $S$ is Putinar.
\end{example}

\subsection{Putinar's theorem}

\begin{teorema}{\textbf{(Putinar)}}\label{put}\\
Let $S$ be a finite set of polynomials such that $M_S$ contains a polynomial of the form $N-(x_1^2+\dots+x_n^2)$. Then each polynomial $f\in \mathbb{R}[X]$, which is positive on $K_S$, is in $M_S$.
\end{teorema}

\begin{proof}
Example \ref{esfera} and proposition \ref{induct} show the theorem.
\end{proof}

The concept of positivity of polynomials on the infinity appears in the following theorem, such that we will introduce it now. We can consider infinity like $\mathbb{R}P^{n-1}$ and the highest degree parts decide how the polynomials behave on infinity. We have already mentioned, that we need polynomials of even degree in the projective case.\\
For every $p\in \mathbb{R}\left[X\right]$ let $p^g$ be its highest degree part. We call $p$ of even degree positive (non-negative, negative, non-positive) on infinity, if $p^g$ is positive (non-negative, negative, non-positive) on $\mathbb{R}P^{n-1}$. The following propositions show, when we can assure, that such a polynomial like in the theorem of Putinar exists.

\begin{proposicion}\label{n_quads}
Let $S$ be a finite set of polynomials such that $M_S$ contains an element (of even degree), which is negative on infinity. Then $M_{S}$ contains a polynomial of the form $N-x_1^2-\dots-x_n^2$.
\end{proposicion}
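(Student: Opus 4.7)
The plan is to use the given $p\in M_S$ of even degree $2d$ with $-p^g>0$ on $\mathbb{R}P^{n-1}$, together with Habicht's theorem, to construct explicitly an element of the form $N-(x_1^2+\dots+x_n^2)$ in $M_S$. First I would normalize: since $-p^g$ is a positive form of degree $2d$, it attains a positive minimum $c$ on the unit sphere, so $-p^g(x)\geq c\,(x_1^2+\dots+x_n^2)^d$ for every $x\in\mathbb{R}^n$. Because $\lambda p\in M_S$ for every $\lambda>0$ (positive scalars are squares), I may replace $p$ by $\lambda p$ to make $c$ as large as needed; in particular $-p(x)$ then grows faster than $x_1^2+\dots+x_n^2$ at infinity, so for $N$ large enough the polynomial
\[
h(x):=N-(x_1^2+\dots+x_n^2)-p(x)
\]
is strictly positive on all of $\mathbb{R}^n$, and its top-degree form is positive on $\mathbb{R}P^{n-1}$ (it is $-p^g$ when $d\geq 2$, and $-p^g-(x_1^2+\dots+x_n^2)$ when $d=1$ after the rescaling).

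In the degree-two case $d=1$ the argument finishes at once: $h$ is then a quadratic polynomial strictly positive on $\mathbb{R}^n$, so its associated Gram matrix is positive semidefinite and $h$ is a sum of squares of linear polynomials, i.e.\ $h\in\sum\mathbb{R}[X]^2$. Rearranging gives $N-(x_1^2+\dots+x_n^2)=h+p$, which is a sum of an element of $\sum\mathbb{R}[X]^2$ and of $p\in M_S$, hence lies in $M_S$.

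For general $d$ the idea is to homogenize $h$ and invoke Habicht. Put
\[
H(x_0,x):=Nx_0^{2d}-x_0^{2d-2}(x_1^2+\dots+x_n^2)-p^h(x_0,x),
\]
where $p^h$ is the degree-$2d$ homogenization of $p$. Then $H(x_0,x)=x_0^{2d}\,h(x/x_0)$ for $x_0\neq 0$ and $H(0,x)=-p^g(x)$, so $H$ is strictly positive on $\mathbb{R}^{n+1}\setminus\{0\}$. Habicht's theorem then produces sums of squares of homogeneous polynomials $M_1,R_1,M_2,R_2$ with $(M_2+R_2)H=M_1+R_1$; specializing $x_0=1$ yields an identity
\[
\sigma(x)\bigl(N-(x_1^2+\dots+x_n^2)\bigr)=\tau(x)+\sigma(x)\,p(x),
\]
with $\sigma,\tau\in\sum\mathbb{R}[X]^2$, so that at least $\sigma\cdot\bigl(N-(x_1^2+\dots+x_n^2)\bigr)$ belongs to $M_S$.

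The main obstacle is exactly to pass from this multiplied-up statement to $N-(x_1^2+\dots+x_n^2)\in M_S$ itself, since Habicht's construction does not in general return a constant multiplier $\sigma$. I expect the resolution to proceed by reducing the general case $d\geq 2$ to the clean case $d=1$, that is, by manufacturing from the given $p$ of degree $2d$ a new degree-two element of $M_S$ that is still negative on infinity and then applying the direct quadratic argument above; constructing this degree-lowered element (or equivalently arranging the homogenization so that the multiplier becomes trivial) is the technical heart of the proof.
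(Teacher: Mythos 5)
Your proposal correctly handles $d=1$ (quadratic case) and correctly pinpoints the obstacle for $d\geq 2$: dehomogenizing the Habicht identity leaves a non-constant sum-of-squares multiplier $\sigma$ in front of $N-(x_1^2+\dots+x_n^2)$, and quadratic modules do not allow you to cancel it. That obstacle is genuine and your sketch does not overcome it; "manufacture a degree-two element of $M_S$ still negative on infinity from the given $p$'' is announced as the technical heart but no construction is given, so the proof is incomplete.

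The paper resolves the multiplier issue by a different route. It applies Habicht not to a homogenization of $h=N-\sum x_i^2-p$ but directly to $-p^g$, obtaining $-(M_2+R_2)p^g=M_1+R_1$ with the crucial extra feature (built into the paper's statement of Habicht) that $M_1$ and $M_2$ are sums of squares of \emph{monomials}. Multiplying $p$ by $M_2+R_2$ gives $(M_2+R_2)p=-M_1-R_1+R$ with $R$ of lower degree, hence $-M_1+R\in M_S$; so rather than an identity with a sum-of-squares factor on the unknown, one already has an element of $M_S$ whose leading part is minus a positive-definite sum of squares of monomials. From there the paper never divides anything out: it repeatedly \emph{adds} well-chosen squares (completing the square on cross monomials, then adding $(x_i^{2^{d-1}}-\tfrac12)^2$ to halve the exponents step by step, after first multiplying by a power of $\sum x_i^2$ to make the top degree a power of $2$) and uses dilations $m_1\mapsto m_1/\epsilon$, $m_2\mapsto \epsilon m_2$ to keep the leading part a positive combination of squared monomials. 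All of these operations stay inside $M_S$, and they terminate at $N-x_1^2-\dots-x_n^2$. So your instinct that one should ultimately descend to the quadratic case is on the right track, but the descent is achieved by this chain of explicit square additions starting from $-M_1+R\in M_S$, not by clearing the multiplier from the homogenized Habicht identity.
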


\begin{proof} Let $p\in M_S$ be of even degree and negative on infinity and $p^g$ its highest degree part. So $p^g$ is negative except on the origin. With Habicht's theorem we have sums of squares $M_1$, $R_1$, $M_2$ and $R_2$, which are positive except on the origin, such that $M_1$ and $M_2$ are sums of squares of monomials and $-(M_2+R_2) p^g=M_1+R_1$. Thus $(M_2+R_2)p=-M_1-R_1+R \in M_S$ with $R=(M_2+R_2)(p-p^g)$, which has minor degree than $M_1$. $R_1$ is a sum of squares, such that $-M_1+R\in M_S$.\\ When we multiply $-M_1+R$ with a potency of $(x_1^2+\dots+x_n^2)$, we can suppose that $M_1$ has degree $2^d$ with $d\in \mathbb{N}$ and the properties of $M_1$ and $R$ such that $-M_1+R\in M_S$ do not change.\\
If $m$ is a monomial of $-M_1+R$ with degree $d_m$ and $i\in \mathbb{N}$, such that $2^i||d_m$ there exist obviously monomials $m_1$ and $m_2$ of degree $\frac{d_m-2^i}{2}$ and $\frac{d_m+2^i}{2}$ such that $m=2m_1 m_2$. When we add to $-M_1+R$ the square $(m_1+m_2)^2$ we can eliminate $m$. We begin with all monomials with $i=0$ and rise $i$ eliminating the monomials up to the case $i=d-1$. So we can assure that no monomials, with an $i$ minor or equal to the $i$ of $m$, appear. Also the process shoud preserve the property of $M_1$, that it only has monomials which are squares and that each such monomial of its degree appears with positive factor. This is possible when we change $m_1$ and $m_2$ with $\frac{m_1}{\epsilon}$ and $\epsilon m_2$ with $\epsilon>0$ sufficiently small, when $m_2$ has the same degree as $M_1$. We obtain a polynomial of the form $N-M_1$ with $N\in \mathbb{R}$ and $M_1$ of the same form like before.\\
Summing squares of monomials we can eliminate in $N-M_1$ monomials (except $N$), which are no multiples of potencies of one variable and we can achieve that the coefficients of the monomials $x_i^{2^d}$ are equal and negative. Multiplying with a positive real we obtain $N-x_1^{2^d}-\dots-x_n^{2^d}$ (with a different $N$) and summing squares of the form $(x_i^{2^{d-1}}-\frac{1}{2})^2$ we can decrease the exponent to $N-x_1^2-\dots-x_n^2$ (with a different $N$). These transformations do not change, that the polynomial belongs to $M_S$.
\end{proof}

\begin{comentario} There are polynomials, which are non-negative on infinity, but describe a compact set as $-x^4-y^2+1$. In general the problem to show the proposition for every polynomial, which describes a compact set, is as difficult as Hilberts 17th problem, because we have to cancel the highest degree part, but Schmüdgen's theorem shows that it is correct. In general the implication that, if a set is compact that then the quadratic module is Putinar, is not true.
\end{comentario}

\begin{proposicion}\label{ex_neg_inf}
Let $S=\left\{p_1,\dots,p_s\right\}$ be a finite set of polynomials of even degree, such that for each point of the projective space we have a $p_i$, which highest degree part $p^g_i$ is negative on the point. Then $M_{S}$ contains a polynomial negative on infinity.
\end{proposicion}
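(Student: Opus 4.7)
My strategy is to exhibit $h\in M_S$ of the form $h=\sum_{i=1}^s\sigma_i p_i$, where each $\sigma_i$ is a sum of squares of \emph{homogeneous} polynomials and the degrees are chosen so that every product $\sigma_i p_i^g$ has one common degree $2N$. In that situation $h^g=\sum_i\sigma_i p_i^g$, and the task reduces to arranging this last sum to be strictly negative on the unit sphere $S^{n-1}$; by homogeneity the negativity then transfers to all of $\mathbb{R}^n\setminus\{0\}$ and hence to $\mathbb{R}P^{n-1}$.

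I would first extract a uniform positive bound from compactness. The continuous function $G(x):=\sum_{i=1}^s\max(-p_i^g(x),0)^2$ is strictly positive on $S^{n-1}$, since at each point some $p_i^g$ is negative, so $G\geq c$ on $S^{n-1}$ for some $c>0$. Each function $g_i(x):=\sqrt{\max(-p_i^g(x),0)}$ is continuous on $S^{n-1}$ and even (because $p_i^g$ is homogeneous of even degree). The key step is to approximate every $g_i$ uniformly on $S^{n-1}$ by a polynomial $q_i\in\mathbb{R}[X]$ all of whose non-zero homogeneous components have even degree: Stone--Weierstra\ss\ applies on the compact Hausdorff space $\mathbb{R}P^{n-1}$, because this subalgebra of $\mathbb{R}[X]$ contains the constants, is closed under products, and separates the points of $\mathbb{R}P^{n-1}$ (the quadratic monomials $x_ix_j$ already do so). Using the pointwise identity $g_i(x)^2\,(-p_i^g(x))=\max(-p_i^g(x),0)^2$ and taking the approximations fine enough, I obtain
$$\sum_{i=1}^s q_i(x)^2\,(-p_i^g(x))\ >\ c/2\qquad\text{on }S^{n-1}.$$

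Then I homogenise. Writing $q_i=\sum_k q_{i,k}$ with $q_{i,k}$ homogeneous of degree $k$ (only even $k$ appear) and letting $D_i$ be the largest such $k$, I set
$$\tilde q_i:=\sum_k (x_1^2+\dots+x_n^2)^{(D_i-k)/2}\,q_{i,k},$$
a polynomial of homogeneous degree $D_i$ coinciding with $q_i$ on $S^{n-1}$. Fixing an integer $N$ with $N\geq D_i+d_i$ for every $i$ (where $\deg p_i=2d_i$) and setting $\sigma_i:=(x_1^2+\dots+x_n^2)^{N-D_i-d_i}\,\tilde q_i^{\,2}$ produces a sum-of-squares polynomial of homogeneous degree $2(N-d_i)$, so each product $\sigma_i p_i$ has top degree $2N$. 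Then $h:=\sum_i\sigma_i p_i\in M_S$ has top-degree part $h^g=\sum_i\sigma_i p_i^g$, which on $S^{n-1}$ equals $\sum_i q_i^2 p_i^g<-c/2$; homogeneity propagates this strict negativity to $\mathbb{R}^n\setminus\{0\}$, and $h$ is the desired element of $M_S$ negative on infinity.

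The hardest step is the uniform approximation, where I have to stay inside the subalgebra of polynomials built from even-degree homogeneous components, so that homogenisation by powers of $(x_1^2+\dots+x_n^2)$ yields an honest polynomial and the resulting top degrees of the summands can be equalised without upsetting the sign computation. Both concerns are absorbed into the choice of Stone--Weierstra\ss\ subalgebra and the single normalisation by powers of $(x_1^2+\dots+x_n^2)$ used in the homogenisation step.
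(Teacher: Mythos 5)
Your proposal is correct and follows essentially the same route as the paper: construct continuous non-negative weights whose $p_i^g$-combination is strictly negative on the sphere modulo antipodal identification, approximate their square roots via Stone--Weierstra\ss\ in the subalgebra of polynomials with only even-degree monomials, homogenise by powers of $x_1^2+\dots+x_n^2$, and equalise degrees so the resulting $\sum\sigma_i p_i$ has top part $\sum\sigma_i p_i^g<0$. The only cosmetic difference is your explicit choice of weights $\max(-p_i^g,0)$ in place of the paper's distance functions $a_i(x)=\operatorname{dist}(x,\{p_i^g\geq 0\})$; both serve the same purpose.
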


\begin{proof} The case $n=1$ is trivial.\\
For the general case we consider the highest degree parts $p^g_1,\dots,p^g_s$. We imagine $p^g_i$ as functions on the unit sphere of $\mathbb{R}^n$ with opposite points identified $S^{op}$ (this is well defined because they have even degree) and we point out that in this imagination $x_1^2+\dots+x_n^2=1$. Because of the condition in the proposition, we find non-negative continuous functions $a_1,\dots, a_t$ on $S^{op}$, such that $a_1 p^g_1+\dots + a_t p^g_t<0$ on $S^{op}$ (for example, let $a_i(x)$ be the distance between $x$ and the set $\left\{y|p^g_i(y)\geq 0\right\}$). The theorem of Stone-Weierstra\ss\ applies on the algebra of polynomials, which only has monomials of even degree and the set $S^{op}$. So we find homogeneous polynomials (we homogenize with $x_1^2+\dots+x_n^2$ if necessary) $b_1, \dots, b_t\in \mathbb{R}[x_1,\dots,x_n]$ such that $b_i$ is sufficiently close to $\sqrt{a_i}$ such that $b_1^2 p^g_1+\dots + b_t^2 p^g_t<0$ on $S^{op}$. Multiplying the terms $b_i$ with a potency of $x_1^2+\dots+x_n^2$ we find sums of squares $\sigma_1,\dots,\sigma_n$ such that $\sigma_i p^g_i$ ($i\in \left\{1,\dots,t\right\}$) have the same degree. Because of the homogenity $\sigma_1 p^g_1+\dots + \sigma_t p^g_t<0$ on $\mathbb{R}^n$ except on the origin and so $\sigma_1 p_1+\dots + \sigma_t p_t$ is negative on infinity.
\end{proof}

\begin{comentario}
Polynomials of odd degree can generate polynomials of even degree, which are negative on points of infinity, when we take the product.
\end{comentario}

We will use the generalizations to show a projective version of the theorem of Putinar. The first proof is taken from a paper of Scheiderer \cite{S1} and generalizes Habicht's theorem.

\begin{proposicion}\label{pos_dem}
Let $f\in \mathbb{R}[X_0]$ be a homogeneous polynomial of even degree, positive except on the origin. Thus exist a $N\in \mathbb{N}$ such that $(x_0^2+\dots+x_n^2)^N f$ is a sum of squares of polynomials.
\end{proposicion}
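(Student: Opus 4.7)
The strategy is to mirror the proof of Habicht's theorem, choosing the Pólya-type SOS approximants so that one can read off a power of $x_0^2+\dots+x_n^2$ as the multiplier on $f$.

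First I would form the conjugates $f_\tau(X_0):=f(\tau_0x_0,\dots,\tau_nx_n)$ for $\tau\in\{-1,+1\}^{n+1}$, all positive except at the origin, together with their elementary symmetric polynomials $s_1,\dots,s_{2^{n+1}}$. Each $s_i$ is invariant under sign changes, so depends only on $x_0^2,\dots,x_n^2$, and as a sum of products of positive polynomials it is itself positive except at the origin. Substituting $y_j=x_j^2$ turns $s_i$ into a form $\tilde s_i(y)$ of degree $di$, positive on the closed positive orthant except at the origin. Pólya's theorem yields a uniform $K$ such that $(y_0+\dots+y_n)^{Ki}\tilde s_i$ has only positive coefficients for every $i$; therefore $\sigma_i:=(x_0^2+\dots+x_n^2)^{Ki}s_i$ is a sum of squares of monomials in $X_0$, of common degree $2(d+K)i$.

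Next I would exploit the Vieta identity $\prod_\tau(T-f_\tau)=0$ evaluated at $T=f$, which gives $f^{2^{n+1}}+\sum_{i=1}^{2^{n+1}}(-1)^is_if^{2^{n+1}-i}=0$. Multiplying by $(x_0^2+\dots+x_n^2)^{K\cdot 2^{n+1}}$ and setting $\tilde f:=(x_0^2+\dots+x_n^2)^Kf$ converts this to
\[
\tilde f^{2^{n+1}}+\sum_{i=1}^{2^{n+1}}(-1)^i\sigma_i\tilde f^{2^{n+1}-i}=0.
\]
Collecting even-index and odd-index terms on opposite sides, exactly as in the proof of Habicht's theorem, yields $\Sigma=\tilde f\cdot Q$ where
\[
\Sigma:=\tilde f^{2^{n+1}}+\sum_{i=1}^{2^n}\sigma_{2i}\tilde f^{2(2^n-i)},\qquad Q:=\sum_{i=1}^{2^n}\sigma_{2i-1}\tilde f^{2(2^n-i)}.
\]
Each summand of $\Sigma$ and of $Q$ is a sum of squares of monomials times the perfect square $(\tilde f^{2^n-i})^2$, so both $\Sigma$ and $Q$ are sums of squares; moreover $Q$ is positive except at the origin.

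The main obstacle is eliminating the auxiliary SOS factor $Q$ to conclude that $(x_0^2+\dots+x_n^2)^Nf$ alone is a sum of squares. I would attempt this by iterating the construction: applying the same Habicht-Pólya recipe to the positive-definite form $Q$ produces an identity $(x_0^2+\dots+x_n^2)^{K'}Q\cdot Q'=\Sigma'$ with $Q',\Sigma'$ sums of squares, and combining with $\tilde f\cdot Q=\Sigma$, together with the fact that a product of two sums of squares is a sum of squares, yields $(x_0^2+\dots+x_n^2)^{K+K'}f\cdot Q'$ SOS, trading $Q$ for $Q'$. A careful finite iteration with bookkeeping of degrees (or, alternatively, a Stone-Weierstrass approximation of $1/\sqrt{Q}$ on the unit sphere absorbed into the SOS representation in the spirit of Proposition~\ref{induct}) is needed to collapse this chain of auxiliary SOS factors to a pure power of $x_0^2+\dots+x_n^2$; this closing step is the technical heart of the proof.
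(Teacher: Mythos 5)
Your approach is genuinely different from the paper's. You try to strengthen Habicht's theorem directly by following the P\'olya--Vieta construction, but the paper instead deduces the statement from Putinar's theorem (Theorem~\ref{put}), which it has just proved via the inductive property~\ref{induct} and Stone--Weierstra\ss{}. Concretely, the paper describes the unit sphere in $\mathbb{R}^{n+1}$ by the single polynomial $g := -(1-(x_0^2+\dots+x_n^2))^2$, which is negative at infinity; Putinar's theorem then gives $f = \sigma_0 + g\sigma_1$ with $\sigma_0,\sigma_1$ sums of squares. Homogenizing this identity with a fresh variable $z$ to even degree, observing that the $z$-odd cross terms must cancel against each other, and finally substituting $z^2 \mapsto x_0^2+\dots+x_n^2$ makes the $g$-term vanish (since $g$ homogenizes to $-(z^2-\sum x_i^2)^2$) and leaves $(x_0^2+\dots+x_n^2)^N f$ equal to an explicit sum of squares.

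The closing step you leave open is a genuine gap, not bookkeeping. The iteration you sketch does not descend: from $\tilde f\, Q = \Sigma$ and a Habicht identity $Q'Q = \Sigma'$ you obtain $\tilde f\, \Sigma' = \Sigma Q'$, which merely trades the positive definite SOS cofactor $Q$ for another positive definite SOS cofactor $\Sigma'$ of strictly larger degree, with no termination in sight. The statement being proved is precisely Reznick's uniform denominators theorem, and it is a known subtlety that it does not follow from P\'olya/Habicht alone: Habicht's construction inherently produces a denominator that is a general positive definite SOS, and collapsing it to a pure power of $\sum x_i^2$ requires an extra idea. Your Stone--Weierstra\ss{} alternative points in the right direction --- the paper's route through Putinar's theorem ultimately rests on Stone--Weierstra\ss{} --- but as sketched it only gives an approximate identity on the unit sphere; upgrading that to an exact polynomial identity needs the full inductive-property machinery, at which point you would essentially be rederiving the paper's proof.
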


\begin{proof}
$f$ is positive on the unit sphere, which we describe with $g:=-(1-(x_0^2+\dots+x_n^2))^2$, which is negative on infinity. Thus, because of Putinar's theorem, exist sums of squares of polynomials $\sigma_0$ and $\sigma_1$ such that $f=\sigma_0+g \sigma_1$.\\ We homogenize with $z$ to even degree and we write the squares of $\sigma_0$ and $\sigma_1$ in the form $(f_1+z\cdot f_2)^2=f_1^2+2f_1zf_2+z^2f_2^2$, such that in $f_1$ and $f_2$ the variable $z$ only appears in even potency. Thus, we see, that we can write the right-hand side with sum of squares, where $z$ only appears in even potency such that the terms $2f_1zf_2$ have to disappear.\\ When we write $x_1^2+\dots+x_n^2$ instead of $z^2$ we obtain a sum of squares $\tilde{\sigma}_0$, such that $(x_0^2+\dots+x_n^2)^N f= \tilde{\sigma}_0$ and so it is a homogeneous identity.
\end{proof}

\begin{proposicion}\label{put_proy1}\textbf{(Projective version)(special case)}\\
Let $S:=\left\{g_1, \dots, g_s\right\}\subset \mathbb{R}[X_0]$ be homogeneous polynomials of even degree and we consider the corresponding semialgebraic set $K_S^p$. Let $f$ be a homogeneous polynomial of even degree, which is positive on $K_S^p$. Thus exist a $N\in \mathbb{N}$, such that $(x_0^2+\dots+x_n^2)^N f$ is in the quadratic module $M_S^h$. 
\end{proposicion}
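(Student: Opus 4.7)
My plan follows the strategy of Proposition \ref{pos_dem}, extending it from positivity on all of $\mathbb{R}P^n$ to positivity on $K_S^p$. I would reduce to the affine Putinar theorem applied on the unit sphere $S^n\subset\mathbb{R}^{n+1}$, and then recover the homogeneous identity using the substitution $z^2\mapsto x_0^2+\cdots+x_n^2$ already employed in that proof.

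First I would enlarge $S$ to $\tilde S := S\cup\{-(1-(x_0^2+\cdots+x_n^2))^2\}$, working in the affine ring $\mathbb{R}[x_0,\ldots,x_n]$. The added polynomial lies in $M_{\tilde S}$ and is negative on infinity, so by Proposition \ref{n_quads} the module $M_{\tilde S}$ contains an element of the form $N'-(x_0^2+\cdots+x_n^2)$, and Theorem \ref{put} shows that $\tilde S$ is Putinar. The affine set $K_{\tilde S}$ consists of those points of $S^n$ on which every $g_i\geq 0$; modulo antipodal identification this equals $K_S^p$. Since $f$ is homogeneous of even degree we have $f(-x)=f(x)$, so $f>0$ on $K_S^p$ forces $f>0$ on $K_{\tilde S}$, and Putinar's theorem yields sums of squares $\sigma_0,\ldots,\sigma_{s+1}\in\mathbb{R}[x_0,\ldots,x_n]$ with
$$f=\sigma_0+\sum_{i=1}^s\sigma_i g_i-\sigma_{s+1}\bigl(1-(x_0^2+\cdots+x_n^2)\bigr)^2.$$

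Next I would homogenize this identity in a fresh variable $z$ to a common even degree $2D$ in $(x,z)$, $D$ chosen large enough. Each square $p_{ij}^2$ appearing in $\sigma_i=\sum_j p_{ij}^2$ becomes a square of an $(x,z)$-homogeneous polynomial of degree $D_i$ (with $D_0=D$, $D_i=D-\deg(g_i)/2$ for $i\geq 1$, and $D_{s+1}=D-2$), and $(1-(x_0^2+\cdots+x_n^2))^2$ becomes $(z^2-(x_0^2+\cdots+x_n^2))^2$. As in the proof of Proposition \ref{pos_dem}, I split each such homogenized square into its $z$-even and $z$-odd parts $P_e+zP_o$, so $(P_e+zP_o)^2=P_e^2+2zP_eP_o+z^2P_o^2$. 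Because the left hand side $z^{2D-d}f$ (with $d=\deg f$) and every $g_i$, as well as $(z^2-(x_0^2+\cdots+x_n^2))^2$, contain only even powers of $z$, all $z$-odd contributions must cancel, leaving
$$z^{2D-d}f=\Sigma_0+\sum_{i=1}^s\Sigma_i g_i-\Sigma_{s+1}\bigl(z^2-(x_0^2+\cdots+x_n^2)\bigr)^2,$$
where each $\Sigma_i$ is a sum of terms $P_e^2+z^2P_o^2$, homogeneous in $(x,z)$ of degree $2D_i$ and involving $z$ only in even powers.

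Finally, substituting $z^2\mapsto x_0^2+\cdots+x_n^2$ annihilates the last term (since $(z^2-(x_0^2+\cdots+x_n^2))^2$ vanishes) and converts each $P_e^2+z^2P_o^2$ into $\bar P_e^2+\sum_{k=0}^n(x_k\bar P_o)^2$, a sum of squares in $\mathbb{R}[X_0]$. Setting $N:=(2D-d)/2$ gives
$$(x_0^2+\cdots+x_n^2)^Nf=\bar\Sigma_0+\sum_{i=1}^s\bar\Sigma_i g_i,$$
the desired representation. The main technical obstacle will be the bookkeeping needed to certify that this really lies in $M_S^h$ (and not merely in $M_S$): the substitution sends every $z$-even $(x,z)$-homogeneous polynomial of degree $e$, written $\sum_k z^{2k}q_k(x)$ with $q_k$ homogeneous in $x$ of degree $e-2k$, to $\sum_k(x_0^2+\cdots+x_n^2)^k q_k(x)$, which is homogeneous in $x$ of degree $e$. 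Applied to $P_e$ (degree $D_i$) and $P_o$ (degree $D_i-1$), this guarantees that each $\bar\Sigma_i$ is a sum of squares of homogeneous polynomials of a common degree $D_i$, and that $\deg(\bar\Sigma_i g_i)=2D_i+\deg(g_i)=2D$ is the same for all $i$, which is exactly what the definition of $M_S^h$ demands.
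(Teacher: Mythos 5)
Your proof is correct, and it takes a genuinely different route from the paper's. The paper's own proof proceeds by combining Proposition~\ref{pos_dem} (as a base case with $S=\emptyset$) with a projective re-derivation of the inductive property from Proposition~\ref{induct}: one works on the unit sphere with antipodal points identified, invokes Stone--Weierstrass in the algebra of polynomials with only even-degree monomials, and homogenizes with $x_0^2+\cdots+x_n^2$ where needed, adding the constraints of $S$ one at a time. You instead treat the whole of $S$ at once: you adjoin the sphere constraint $-(1-(x_0^2+\cdots+x_n^2))^2$, apply the affine Theorem~\ref{put} to the enlarged set $\tilde S$ (justified via Proposition~\ref{n_quads}), and then homogenize the resulting affine Putinar identity wholesale, using the $z$-even/$z$-odd splitting and the substitution $z^2\mapsto x_0^2+\cdots+x_n^2$ exactly as in the proof of Proposition~\ref{pos_dem}, which makes the sphere-constraint term vanish. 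In effect, your proof is the natural extension of the paper's proof of Proposition~\ref{pos_dem} from the case $S=\emptyset$ to general $S$, whereas the paper chooses to set up reusable projective inductive machinery (which it later redeploys in Theorem~\ref{put_hom}). Both arguments are sound; yours is more self-contained and explicit, at the cost of not providing the projective inductive property that the paper wants in its toolbox. Your final degree bookkeeping — that each $\bar\Sigma_i$ is a sum of squares of homogeneous polynomials of a common degree $D_i$, with $2D_i+\deg g_i=2D$ independent of $i$, so the representation lies in $M_S^h$ and not merely $M_S$ — is correctly worked out and is indeed the point one must check.
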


\begin{proof}
The proposition follows from the inductive propery and the prior proposition. The demonstration of the inductive property is analog to the proof in the affine case, when we use the unit sphere with opposite points identificated as compact set and the algebra of polynomials which only have monomials of even degree for the theorem of Stone-Weierstra\ss\ and when we homogenize with $x_0^2+\dots+x_n^2$, when it is necessary.
\end{proof}

\begin{comentario}
We will see a generalization to positive denominators at the end of this part. We pay attention, that this projective version does not need a condition as existence of polynomials negative on infinity.
\end{comentario}

\subsection{Schmüdgen's theorem}

To demonstrate Schmüdgen's theorem, we need stronger tools, which one can follow for example from Tarski's transfer principle. We will use the Positivstellensatz of Krivine and Stengle \cite{M1} to show Schmüdgen's theorem with a similar step like in literature, but less abstract (compare \ref{neg_inf} and \ref{descenso}).

\begin{teorema}{\textbf{(Krivine, Stengle)}}\\
Let $S$ be a finite set of polynomials of $\mathbb{R}[X]$, $K_S$ and $T_S$ the corresponding semialgebraic set and the corresponding preorden. Then:
\begin{enumerate}
\item $f>0$ on $K_S$ $\Longleftrightarrow$ there are $p,q\in T_S$ such that $pf=1+q$
\item $f\geq 0$ on $K_S$ $\Longleftrightarrow$ there are $m\in \mathbb{N}$ and $p,q\in T_S$ such that $pf=f^{2m}+q$
\item $f=0$ on $K_S$ $\Longleftrightarrow$ there is a $m\in \mathbb{N}$ such that $-f^{2m}\in T_S$.
\end{enumerate}
\end{teorema}

\begin{proof}
One can show in a elementary way, that these three parts are equivalent. A proof one can find for example in \cite{M1}.
\end{proof}

\begin{proposicion}\label{neg_inf}
Let $S$ be a finite set of polynomials, such that $K_S$ is compact. Then $T_S$ contains a polynomial, which is negative on infinity.
\end{proposicion}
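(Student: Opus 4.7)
My plan is to use the Positivstellensatz of Krivine and Stengle, applied to a polynomial that witnesses compactness of $K_S$, and then to extract the required element by comparing highest-degree parts. Since $K_S$ is compact, $\sum_{i=1}^{n}x_i^2$ is bounded on it, so we can choose $R>0$ large enough that $h:=R-(x_1^2+\cdots+x_n^2)$ is strictly positive on $K_S$. The leading homogeneous part of $h$ is $h^g=-(x_1^2+\cdots+x_n^2)$, which is itself strictly negative on $\mathbb{R}^n\setminus\{0\}$, so the task is to produce some element of $T_S$ whose leading form shares this sign behavior.

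I would apply part~(2) of the Positivstellensatz to $h\ge 0$ on $K_S$, obtaining $p,q\in T_S$ and $m\in\mathbb{N}$ with $p\cdot h=h^{2m}+q$. The right-hand side is a sum of two elements of $T_S$ (the square $h^{2m}$ and $q$), so $p\cdot h$ lies in $T_S$ as a polynomial. A degree comparison is now decisive: the left side has degree $\deg p+2$ with leading form $-p^g(x_1^2+\cdots+x_n^2)$, while $h^{2m}$ has degree $4m$ with leading form $(x_1^2+\cdots+x_n^2)^{2m}$. If the witness satisfies $\deg p+2<4m$, then matching top-degree terms in the identity forces $q$ to have degree exactly $4m$ with leading form $-(x_1^2+\cdots+x_n^2)^{2m}$, strictly negative on $\mathbb{R}^n\setminus\{0\}$; under this condition $q\in T_S$ is the polynomial sought.

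The principal obstacle is to arrange the degree inequality $\deg p+2<4m$. A naive rescaling of the Krivine--Stengle identity by $h^{2k}$ sends $(p,q,m)$ to $(p\,h^{2k},q\,h^{2k},m+k)$ and preserves the difference $\deg p+2-4m$, so it does not suffice on its own. I would break the deadlock by noting that the witness $p$ is strictly positive on $K_S$ (because $p\,h=h^{2m}+q\ge h^{2m}>0$ on $K_S$, and $h>0$ there), and then reapplying part~(1) of the Positivstellensatz to $p$ to obtain $p'\cdot p=1+q'$ with $p',q'\in T_S$. Combining this with the original identity $p\cdot h=h^{2m}+q$ allows one to trade algebraic weight between the two factors and so replace the witness $p$ by one of a more favorable degree profile. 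After a bounded number of such iterations the required inequality is achieved, and the polynomial $q$ from the resulting part~(2) identity is the element of $T_S$ that is negative on infinity.
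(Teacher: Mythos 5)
Your setup is natural and points in the right direction, but there is a genuine gap at the step where you try to arrange the degree inequality $\deg p + 2 < 4m$. The Positivstellensatz of Krivine and Stengle gives no control whatsoever on $\deg p$ relative to $m$, and your proposed remedy does not close the gap: applying part~(1) to the witness $p$ gives $p'p = 1 + q'$, but multiplying or substituting this into $ph = h^{2m}+q$ only yields tautologies (for instance, $p'(ph) = p'h^{2m}+p'q$ combined with $p'p = 1+q'$ collapses back to $p'(ph - h^{2m} - q)=0$). The phrase ``trade algebraic weight'' is not an argument; you would need to exhibit a concrete operation on identities that strictly decreases $\deg p + 2 - 4m$ while keeping $p,q\in T_S$, and no such operation is visible. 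Multiplying through by $h^{2k}$ leaves that quantity invariant, as you already noticed, and replacing $h$ by $h^K$ just produces a new, unbounded $\deg p$.

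The paper avoids the issue entirely by using part~(1) of the Positivstellensatz and a telescoping induction. From $p\bigl(N-\sum x_i^2\bigr) = 1+q$ with $p,q\in T_S$ one rearranges to $Np - 1 = q + p\sum x_i^2 \in T_S$. Writing $y := \tfrac{1}{N}\sum x_i^2 \in T_S$, the same relation reads $Np-1 = q + Npy$, which gives the identity
\[
Np - 1 - y - \cdots - y^{l+1} \;=\; q + y\bigl(Np - 1 - y - \cdots - y^{l}\bigr),
\]
and since $T_S$ is closed under addition and multiplication and contains $q$ and $y$, induction shows $Np - 1 - y - \cdots - y^{l+1}\in T_S$ for every $l$. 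Once $2(l+1) > \deg p$, the leading form of this element is $-y^{l+1}$, a negative multiple of $\bigl(\sum x_i^2\bigr)^{l+1}$, which is strictly negative on $\mathbb{R}P^{n-1}$. The crucial point is that $l$ can be chosen freely at the end, whereas in your scheme the corresponding exponent $m$ is handed to you by the Positivstellensatz with no leverage against $\deg p$. If you want to salvage your line, the fix is precisely this: stop trying to make the given $m$ large, and instead manufacture arbitrarily high powers of $\sum x_i^2$ with a negative sign \emph{inside} $T_S$ by iterating a single part~(1) identity.
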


\begin{proof}
Given that $K_S$ is compact, we find a sufficiently big $N\in \mathbb{N}$, such that $N-(x_1^2+\dots+x_n^2)$ is positive on $K_S$. With the Positivstellensatz of Krivine and Stengle we find $p,q\in T_S$, such that $p(N-(x_1^2+\dots+x_n^2))=1+q$. So $Np-1=q+p(x_1^2+\dots+x_n^2)\in T_S$.\\ We write $y$ instead of $\frac{x_1^2+\dots+x_n^2}{N}$ and we obtain $Np-1=q+Npy$. Adding $-y-y^2-\dots-y^{l+1}$ to the equation, we obtain $Np-1-y-\dots-y^{l+1}=q+y(Np-1-y-\dots-y^l)$.\\ The polynomials $q$, $y$ and $Np-1$ ($l=0$) are elements of $T_S$ and the equation is a step in an induction to prove that $Np-1-y-\dots-y^{l+1}\in T_S$. So it is true for every $l\in \mathbb{N}$. With $l$ sufficiently big, such that $p$ has a minor degree than $2l$, we obtain a polynomial, which is negative on infinity.
\end{proof}

\begin{comentario} To demonstrate that $T_S$ contains also a polynomial of the form $N-x_1^2-\dots-x_n^2$, we do not need Habicht's theorem because our highest degree part is in the form we need, to follow the demonstration in that proof.
\end{comentario}

\begin{teorema}{\textbf{(Schmüdgen)}}\\
Let $S$ be a finite set of polynomials, such that $K_S$ is compact. Thus, each polynomial $f\in \mathbb{R}[X]$, which is positive on $K_S$, is in $T_S$.
\end{teorema}

\begin{proof}
The propositions \ref{neg_inf}, \ref{n_quads} show that $T_S$ contains a polynomial of the form $N-(x_1^2+\dots+x_n^2)$. We obtain the result with Putinar's theorem \ref{put}.
\end{proof}

We will show a theorem, which generalizes Schmüdgen's theorem to the homogeneous case and so we obtain a very general Positivstellensatz in projective language. Therefore we need a homogeneous version of Zeng \cite{Z} of the Positivstellensatz of Krivine and Stengle.

\begin{teorema}{\textbf{(Zeng)(special case)}}\\
Let $S, T$ be finite sets of homogeneous polynomials of $\mathbb{R}[X_0]$. Let $T_{S\cup T}^h$ be the homogeneous preorden generated by $S\cup T$ and $P$ the set of products (semigroup) of elements of $T$. Let $f$ be homogeneous and positive on $K_S\cap \left\{X_0| \forall g\in T: g(X_0)>0\right\}$. Then there are $p,q\in T_{S\cup T}^h$ and $t\in P$ such that $pf=t+q$ as homogeneous identity.
\end{teorema}

\begin{proof}
\cite{Z}
\end{proof}

\begin{teorema}\label{schm_hom}{\textbf{(Homogeneous version of Schmüdgen's theorem)}}\\
Let $S$ be a finite set of homogeneous polynomials, $K_S$ its corresponding semialgebraic set and $K_S=K\dot{\cup}K_0$ a disjoint union of closed sets, such that $\mathring{K_S}\subseteq K$. Let $g$ be a homogeneous polynomial of degree $d>0$, which is positive on $K$ and which equals zero on $K_0$ (so the closure of $K_0$ in Zariski's topology does not cut $K$).\\ Then for every homogeneous polynomial $f$, which has a degree which is a multiple of $d$ and which is positive on $K$, exist a natural number $N$, such that $g^N f$ is in the homogeneous preorden $T_S^h$.
\end{teorema}

We will show it in the same manner like Marshall \cite{M1} shows Schmüdgen's theorem in his book (it goes back to Wörmann), but we generalize the concept of the archimedean property.

\begin{defi}
We call a homogeneous quadratic module $M$ archimedean with respect to $g\in M$, such that $gM\subseteq M$, if for every homogeneous polynomial $f$, which has a degree, which is a multiple of the degree of $g$ (i.e. $\deg f= l\cdot \deg g$) exist $N, k \in \mathbb{N}_0$, such that $g^{k+l}N\pm g^k f\in M$.
\end{defi}

\begin{proposicion}
Let $M$ be a homogeneous quadratic module and $g\in M$ of degree $d$ such that $gM\subseteq M$. If $M$ contains a polynomial of the form $g^{k}(g^2 N- (\sum x_i^2)^{d})$, then it is archimedean with respect to $g$. 
\end{proposicion}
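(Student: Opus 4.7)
My plan is threefold: iterate the hypothesis to bound powers of $S := \sum x_i^2$ by powers of $g$, then bound each monomial square $(x^a)^2$ via the multinomial expansion of $S^{ld}$, and finally pass from a square to the monomial itself using that $(g^l \pm x^a)^2$ is a square of a polynomial. Every multiplication I perform will be either by a sum of squares or by a power of $g$; both operations preserve $M$ (the first because $M$ is a quadratic module, the second because $gM \subseteq M$ iteratively yields $g^j M \subseteq M$ for all $j \geq 0$, and in particular $g^j \in M$). This is crucial, since $M$ is not assumed to be a preorder.

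Let $k_0, N_0$ be the integers supplied by the hypothesis. I would first prove by induction on $l \geq 0$ that $g^{k_0+2l} N_0^l - g^{k_0} S^{ld} \in M$; the inductive step multiplies the level-$l$ relation by the sum of squares $S^d$ and adds $N_0^l g^{2l}$ times the hypothesis itself, so that the intermediate terms $g^{k_0+2l} N_0^l S^d$ cancel, leaving the level-$(l+1)$ relation.

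Next I would exploit that by the multinomial theorem, for any monomial $x^a$ with $|a| = ld$ one has $S^{ld} - \binom{ld}{a}(x^a)^2 \in \sum \mathbb{R}[X_0]^2$, since every term of the expansion of $S^{ld}$ is a positive multiple of a square $(x^b)^2$ with $|b| = ld$. Multiplying by $g^{k_0}$ and combining with the preceding bound, then dividing by $\binom{ld}{a}$ and rounding the scalar up to an integer (absorbing the excess into a non-negative multiple of $g^{k_0+2l} \in M$), I obtain $g^{k_0+2l} N' - g^{k_0}(x^a)^2 \in M$ with $N' \in \mathbb{N}_0$. The identity
\[
g^{k_0}(g^l \pm x^a)^2 \;=\; g^{k_0+2l} \,\pm\, 2\,g^{k_0+l} x^a \,+\, g^{k_0}(x^a)^2
\]
exhibits its left-hand side as a sum-of-squares multiple of $g^{k_0} \in M$, hence lies in $M$; adding it to the previous bound cancels the $(x^a)^2$ term, and halving the leading scalar (again rounding up into $\mathbb{N}_0$) yields the archimedean relation $g^{(k_0+l)+l} N'' \pm g^{k_0+l} x^a \in M$ for each such monomial.

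Finally, a general homogeneous $f$ of degree $ld$ is a finite linear combination $\sum c_a x^a$; raising all the monomial exponents $k_a$ to a common value $K$ by repeated multiplication by $g$, scaling each two-sided monomial bound by $|c_a|$ (with the sign chosen so as to produce $\pm c_a x^a$), and summing, I recover $g^{K+l} N \pm g^K f \in M$ with $N \in \mathbb{N}_0$. The main obstacle is pure bookkeeping---keeping $N, k \in \mathbb{N}_0$ across the rescalings and aligning the various $g$-exponents---since no step ever requires multiplying two general elements of $M$, so that the quadratic-module-versus-preorder distinction never becomes an obstruction.
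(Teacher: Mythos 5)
Your proof is correct, and it takes a genuinely different route from the paper's. The paper introduces the set $A$ of all homogeneous $f$ (of degree a multiple of $d$) satisfying the archimedean bound, shows $A$ contains the scalars and the degree-$d$ monomials, and then proves $A$ is closed under same-degree sums and under products; the product step reduces to squares via $ab=\tfrac14\bigl((a+b)^2-(a-b)^2\bigr)$ and uses the identity $g^{k+l}\bigl(g^{2l}N^2-a^2\bigr)=\tfrac1{2N}\bigl((g^lN+a)^2(g^{k+l}N-g^ka)+(g^lN-a)^2(g^{k+l}N+g^ka)\bigr)$. Since every degree-$ld$ monomial factors into $l$ degree-$d$ monomials, this closure argument covers all relevant degrees. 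You instead go directly to arbitrary degree $ld$: you iterate the single hypothesis to get $g^{k_0+2l}N_0^l - g^{k_0}S^{ld}\in M$, pull out an individual monomial square via the multinomial expansion of $S^{ld}$, and then convert the bound on $(x^a)^2$ into a two-sided bound on $x^a$ using $g^{k_0}(g^l\pm x^a)^2\in M$, finally summing with scalar weights. The net effect is the same, and both arguments use only multiplication by $g$-powers and by sums of squares, so neither needs the preorder structure. What the paper's version buys is a reusable lemma (the ``archimedean-bounded'' set is a subring), which is the standard abstract tool in this area; what yours buys is explicitness --- the $N$ and $k$ for a given $f$ are visible --- at the cost of the multinomial bookkeeping. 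One small point worth making precise in a final write-up: in your last paragraph, once the degree $ld$ is fixed, every monomial bound you produce already has the same exponent $k_a=k_0+l$, so the ``raising all exponents to a common $K$'' step is vacuous here; you only need it if you want a single $k$ uniform in $l$, which the definition does not require.
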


\begin{proof}
Let $A$ be the subset of the polynomials $f$, which fulfill the generalized archimedean property with respect to $g$. $A$ contains the polynomials of degree $0$ because the reals are archimedean.\\
For every monomial $m$ of degree $d$ we have $g^{k}(g^2 N-m^2)\in M$ (and obviously $g^{k}(g^2 N+m^2)\in M$), because in $g^{k}(g^2 N- (\sum x_i^2)^{d})$ appears every $m^2$ in the subtrahend with factor $\leq -1$, such that we only have to add $g^k((\sum x_i^2)^{d}-m^2)\in M$.\\
With the identity: $g^{k+1}\left(g N \pm m\right)=\frac{1}{2}\left(g^{k+2}(N-1)+g^k(g^2 N-m^2)+g^k(m\pm g)^2\right)\in M$ we know, that the monomials of degree $d$ are in $A$. $A$ is closed under sums of elements of the same degree $ld$ with $l\in \mathbb{N}$ because $g^{k_1+k_2}\left(g^l (N_1+N_2)\pm (a_1+a_2)\right)=g^{k_2}g^{k_1}(g^l N_1\pm a_1)+g^{k_1}g^{k_2}(g^l N_2\pm a_2)\in M$, when $g^{k_1}(g^l N_1\pm a_1)\in M$ and $g^{k_2}(g^l N_2\pm a_2)\in M$.\\
The identity $ab=\frac{1}{4}\left((a+b)^2-(a-b)^2\right)$ (together with the fact that we can homogenize with $g$) shows, that, to show that $A$ is closed under products, it is sufficient to show it for a square. This follows from the identity:\\ $g^{k+l} \left(g^{2l} N^2-a^2\right)=\frac{1}{2N}\left((g^l N+a)^2(g^{k+l} N-g^k a)+(g^l N-a)^2(g^{k+l} N+g^k a)\right)\in M$ when $g^{k+l} N\pm g^k a\in M$ (and obviously $g^{k+l}(g^{2l}N^2+a^2)\in M$).
\end{proof}

\begin{proposicion}
Under the conditions of \ref{schm_hom} with $g\in T_S^h$ follows, that $T_S^h$ contains a polynomial of the form $g^{k}(g^2 N- (\sum x_i^2)^{d})$. 
\end{proposicion}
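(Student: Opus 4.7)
The plan is to mimic the proof of Proposition \ref{neg_inf} in the homogeneous setting, using Zeng's Positivstellensatz in place of that of Krivine-Stengle. First, I would observe that compactness of $K\cap\{X_0\in\mathbb{R}^{n+1}\setminus\{0\}:\sum x_i^2=1\}$ together with positivity of $g$ on $K$ furnishes a constant $c>0$ with $g\geq c$ on that compact set, so by homogeneity of $g$ we obtain $Ng^2-(\sum x_i^2)^d>0$ on $K\setminus\{0\}$ for every $N>1/c^2$. Since $g=0$ on $K_0$ and $g>0$ on $K$, we also have the identity $K=K_S\cap\{g>0\}$.

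Next, I would apply Zeng's theorem with $T=\{g\}$. Because $g\in T_S^h$ the preorders coincide, $T_{S\cup\{g\}}^h=T_S^h$, so Zeng produces $p,q\in T_S^h$ and $m\in\mathbb{N}$ with the homogeneous identity $p(Ng^2-(\sum x_i^2)^d)=g^m+q$. Rearranging yields $s_0:=Ng^2p-g^m=q+p(\sum x_i^2)^d\in T_S^h$. Writing $A:=Ng^2$ and $B:=(\sum x_i^2)^d$ and using $pB=Ap-g^m-q$, a direct computation gives the recursion $s_{l+1}=Bs_l+A^{l+1}q$, where
\[
s_l:=A^{l+1}p-g^m\sum_{j=0}^{l}A^{l-j}B^j.
\]
By induction on $l$ this lies in $T_S^h$ for every $l\geq 0$, since $A$ and $B$ are sums of squares and $q\in T_S^h$.

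The principal obstacle is the final extraction: each $s_l$ still carries the foreign factor $p$ instead of a pure power of $g$, so one must trade $p$ for a $g^{k}$-contribution in order to arrive at a polynomial of the exact form $g^k(g^2N'-(\sum x_i^2)^d)$. My plan is to imitate the manipulations used in the proof of Proposition \ref{n_quads} (adding squares of monomials, rescaling coefficients, lowering exponents), now adapted to the homogeneous setting and tracking $g$-powers throughout; as noted in the remark following \ref{neg_inf}, Habicht's theorem is not required because the dominant negative term $-g^m B^l$ already has the right shape. A useful structural fact is that both $p$ and $q$ vanish on $K_0\setminus\{0\}$: evaluating the Zeng identity on $K_0$ gives $-p(\sum x_i^2)^d=q$ and, combined with $p,q\geq 0$ and $(\sum x_i^2)^d>0$ there, forces $p=q=0$ on $K_0\setminus\{0\}$. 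This vanishing should supply the room to absorb $p$ into a $g^k$-factor and produce the desired element $g^k(g^2N'-(\sum x_i^2)^d)\in T_S^h$.
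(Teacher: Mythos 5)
Your opening move agrees with the paper: choose $N_0$ with $g^2N_0-(\sum x_i^2)^d>0$ on $K$, apply Zeng's Positivstellensatz with $T=\{g\}$ (noting $T^h_{S\cup\{g\}}=T_S^h$ since $g\in T_S^h$), and obtain the homogeneous identity $p\bigl(g^2N_0-(\sum x_i^2)^d\bigr)=g^m+q$ with $p,q\in T_S^h$. Your recursion $s_{l+1}=Bs_l+A^{l+1}q$ with $s_l=A^{l+1}p-g^m\sum_{j=0}^l A^{l-j}B^j$ is algebraically correct and does show $s_l\in T_S^h$ for all $l$, but this is where the proposal and the paper part ways, and the remainder of your argument has a genuine gap.

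The gap is the ``final extraction.'' You are transplanting the strategy of Proposition \ref{neg_inf}: in the affine proof one takes $l$ large so that the \emph{highest-degree part} of $Np-1-y-\dots-y^{l+1}$ is the single term $-y^{l+1}$, and then Proposition \ref{n_quads} converts a polynomial whose top part is a negative sum of monomial squares into $N'-\sum x_i^2$. In the homogeneous setting every term of $s_l$ has the same degree $(m+2l)d$, so there is no highest-degree part to isolate. The analogous ordering, by powers of $g$, gives (for $l$ large) $s_l=-g^mB^l+g^{m+2}(\text{messy terms involving }p)$. This is of the form $g^m\bigl(g^2\cdot(\text{stuff})-B^l\bigr)$, not $g^k(g^2N'-B)$: the subtrahend is $B^l$, not $B$, and the ``stuff'' is a non-constant element dragging the unknown factor $p$ along. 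The machinery of Proposition \ref{n_quads} (adding inhomogeneous squares such as $(m_1+m_2)^2$ and $(x_i^{2^{d-1}}-\tfrac12)^2$, rescaling, lowering exponents) does not survive the homogeneity constraint: you may only add homogeneous elements of the correct degree, and no obvious homogeneous substitute reduces $B^l$ to $B$ or eliminates $p$. Your observation that $p$ and $q$ vanish on $K_0\setminus\{0\}$ is correct but pointwise vanishing on a semialgebraic set does not give a factorization $p=g^k\cdot(\text{preorder element})$, and you provide no mechanism to turn that vanishing into the required algebraic identity.

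The paper resolves exactly this obstacle with a different and essential idea that is entirely absent from your proposal: it introduces the auxiliary preorder $T'$ generated by $T_S^h$ together with $g^2N_0-(\sum x_i^2)^d$, observes that $T'$ is archimedean with respect to $g$ by the preceding proposition, and uses this to produce $N_1,k_1$ with $g^{k_1}(g^mN_1-q)\in T'$ — a \emph{bound on $q$}, not a manipulation of $p$. The Zeng identity is then used a second time to pass back from $T'$ to $T_S^h$, since $(g^m+q)(g^2N_0-(\sum x_i^2)^d)=p(g^2N_0-(\sum x_i^2)^d)^2\in T_S^h$ shows that multiplying any element of $T'$ by $g^m+q$ lands it in $T_S^h$. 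A few explicit algebraic identities then complete the square and yield $g^{k}(g^2N-(\sum x_i^2)^d)\in T_S^h$. That archimedean detour is the mechanism your proposal is missing; without it, I do not see how the elementary manipulations you propose can close the argument.
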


\begin{proof}
We chose $N_0$ sufficiently big, such that $g^2 N_0- (\sum x_i^2)^{d}$ is positive on $K$. With the homogeneous Positivstellensatz ($S$ as in \ref{schm_hom} and $T=\left\{g\right\}$) follows $p(g^2 N_0- (\sum x_i^2)^{d})=g^{m}+q$ with $p,q\in T_S^h$ and so $(g^{m}+q)(g^2 N_0- (\sum x_i^2)^{d})=p(g^2 N_0- (\sum x_i^2)^{d})^2\in T_S^h$.\\
Let $T'$ be the homogeneous preorden generated by $T_S^h$ and $g^2 N_0- (\sum x_i^2)^{d}$. $T'$ is archimedean, because of the prior proposition. So we have $N_1, k_1\in \mathbb{N}$, such that $g^{k_1}(g^{m}N_1-q) \in T'$.\\ 
Multiplying with $g^{m}+q\in T_S^h$ we obtain $g^{k_1}(g^{m} N_1-q)(g^{m}+q)\in T_S^h$ (because the generator $g^2 N_0- (\sum x_i^2)^{d}$ of $T'$ multiplied with $g^{m}+q$ is in $T_S^h$).\\
When we add $g^{k_1}\left(g^{m}\frac{N_1}{2}-q\right)^2\in T_S^h$, we obtain $g^{k_1+m}\left(g^{m}\left(N_1+\frac{N_1^2}{4}\right)-q\right)\in T_S^h$.\\
When we multiply with $N_0$ and add $g^{k_1+m-2}(g^{m}+q)(g^2 N_0- (\sum x_i^2)^{d})\in T_S^h$ and $g^{k_1+m-2}q (\sum x_i^2)^{d}\in T_S^h$ we obtain: $g^{k_1+2m-2}\left(g^2N_0\left(\frac{N_1}{2}+1\right)^2-(\sum x_i^2)^{d}\right)\in T_S^h$.\\ With $k=k_1+2m-1$ and $N=N_0\left(\frac{N_1}{2}+1\right)^2$, we obtain our proposition.
\end{proof}

\begin{proposicion}\label{descenso}
If in the conditions of \ref{schm_hom} $T:=T_S^h$ is archimedean with respect to $g\in T_S^h$, then for each homogeneous $f$, which has an degree which is a multiple of the degree of $g$ (i.e. $\deg f= l\cdot \deg g$) and which is positive on $K$, exist a $N\in \mathbb{N}$, such that $g^{N} f$ is in the homogeneous preorden $T_S^h$.
\end{proposicion}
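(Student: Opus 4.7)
The plan is to follow Marshall's (Wörmann's) proof of Schmüdgen's theorem, adapted to the relative setting of the archimedean property with respect to $g$. The starting point is Zeng's homogeneous Positivstellensatz applied to the hypothesis that $f$ is positive on $K=K_S\cap\{g>0\}$: taking $T=\{g\}$ and noting $g\in T_S^h$ (so $T_{S\cup\{g\}}^h=T_S^h$), this produces a homogeneous identity $pf=g^m+q$ with $p,q\in T_S^h$ and $\deg p=(m-l)\deg g$.

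To convert this identity into a representation of $g^N f$ itself, I would introduce
$$
B:=\{\lambda\in\mathbb{R}:\exists\,k\in\mathbb{N}_0,\ g^k(f-\lambda g^l)\in T_S^h\},\qquad b^*:=\sup B.
$$
The relative archimedean property applied to $f$ gives some $-N_0\in B$, so $B\neq\emptyset$. Since elements of $T_S^h$ are non-negative on $K_S\supseteq K$ and $g>0$ on the compact set $K\cap S^n$, we obtain the upper bound $b^*\leq \min_{K\cap S^n} f/g^l<\infty$. Moreover $B$ is downward-closed: if $\lambda'<\lambda\in B$ then $g^k(f-\lambda' g^l)=g^k(f-\lambda g^l)+(\lambda-\lambda')g^{k+l}\in T_S^h$. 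Therefore the proposition reduces to showing $b^*>0$, since any $0<\lambda\in B$ gives $g^k f=g^k(f-\lambda g^l)+\lambda g^{k+l}\in T_S^h$.

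Showing $b^*>0$ is the main obstacle. The approach, by contradiction, is to assume $b^*\leq 0$, pick $\lambda_0\in(b^*,\min_{K\cap S^n} f/g^l)$ so that $f-\lambda_0 g^l>0$ on $K$, and apply Zeng once more to $f-\lambda_0 g^l$: this yields $p_0(f-\lambda_0 g^l)=g^{m_0}+q_0$ with $p_0,q_0\in T_S^h$. Combining this with the archimedean bounds $g^{k'+m_0-l}N_0\pm g^{k'}p_0\in T_S^h$ via the identity $(A-B)(A+B)=A^2-B^2$ (whose output lies in $T_S^h$), multiplying by $(f-\lambda_0 g^l)^2$, and substituting $(p_0(f-\lambda_0 g^l))^2=(g^{m_0}+q_0)^2$ reduces matters to a quadratic relation of the form $g^{2k''}(N_0^2(f-\lambda_0 g^l)^2-g^{2l})\in T_S^h$; one then extracts the linear relation $g^k(f-\lambda_0 g^l)\in T_S^h$ by factoring the difference of squares and handling the strictly positive factor on $K$ through one further application of Zeng together with the archimedean property. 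This linear relation shows $\lambda_0\in B$, contradicting $\lambda_0>b^*$. The delicate point—corresponding in Marshall's treatment to the Kadison--Dubois detection of positivity by real characters—is precisely this last extraction step, which passes from a quadratic estimate in $f-\lambda_0 g^l$ to a linear one.
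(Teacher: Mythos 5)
Your approach diverges from the paper's, and it has a genuine gap at exactly the point you flag as ``delicate.'' The paper follows W\"ormann's descent: after Zeng gives $ft=g^{l+m}+s$ with $s,t\in T_S^h$, it considers $\Sigma=\{r\in\mathbb{Q}\mid\exists k,\ g^{k+l}r+g^kf\in T_S^h\}$ (the negation of your $B$), uses the archimedean property to bound the \emph{multiplier} $t$ by some $\kappa$, i.e.\ $g^{k_0+m}\kappa-g^{k_0}t\in T_S^h$, and then produces an explicit one-line algebraic identity showing that $r\in\Sigma\Rightarrow r-1/\kappa\in\Sigma$. Iterating until $r<0$ immediately gives $g^Nf\in T_S^h$. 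This is fully constructive and needs no supremum, no contradiction, and in particular no ``detection of positivity.''

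Your sketch instead sets $b^*=\sup B$, assumes $b^*\leq 0$, and tries to derive a contradiction by producing a $\lambda_0>b^*$ in $B$. The route via the quadratic estimate $g^{2k''}\bigl(N_0^2(f-\lambda_0 g^l)^2-g^{2l}\bigr)\in T_S^h$ is where things break down. Factoring as $(N_0(f-\lambda_0 g^l)-g^l)(N_0(f-\lambda_0 g^l)+g^l)$ and applying Zeng to the second factor produces an identity $p_1\bigl(N_0(f-\lambda_0 g^l)+g^l\bigr)=g^{m_1}+q_1$; multiplying the membership relation by $p_1$ then gives $g^{2k''+m_1}\bigl(N_0(f-\lambda_0 g^l)-g^l\bigr)+g^{2k''}q_1\bigl(N_0(f-\lambda_0 g^l)-g^l\bigr)\in T_S^h$. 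You cannot simply drop the cross term, because $q_1\bigl(N_0(f-\lambda_0 g^l)-g^l\bigr)$ has no sign control in $T_S^h$. Extracting the linear membership from the quadratic one is in fact the whole content of a representation theorem in this graded setting, so appealing to ``one further application of Zeng together with the archimedean property'' is circular: it assumes a statement of the same strength as the one being proved. If you want the supremum/contradiction presentation, you would have to first establish a graded Kadison--Dubois theorem, which is a substantial detour; W\"ormann's descent, as in the paper, is both shorter and elementary.

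Two smaller remarks. First, your upper bound $b^*\leq\min_{K\cap S^n}f/g^l$ is correct but should be argued via: $g^k(f-\lambda g^l)\in T_S^h$ forces $f-\lambda g^l\geq 0$ on $K$ where $g>0$, and $K$ is closed in the compact space $\mathbb{R}P^n$ so the minimum exists. Second, what the archimedean hypothesis is actually used for in the paper's proof is not to put something in $B$ initially (though it does that too), but to bound the multiplier $t$ coming from Zeng; this is the ingredient your sketch never invokes and precisely what makes the descent step go through.
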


\begin{proof}
Because of the homogeneous Positivstellensatz of Zeng exist $m \in \mathbb{N}$ and $s,t\in T$, such that $ft=g^{l+m}+s$ is a homogeneous identity and so $ft-g^{l+m}\in T$ ($\deg t=m \deg g$).\\ Let $\Sigma =\left\{r\in \mathbb{Q}| \exists k\in \mathbb{N}: g^{k+l}r+g^{k} f\in T \right\}$, which is not empty, because $T$ is archimedean with respect to $g$. Let $r_1\in \Sigma, r_1\geq 0$ and we choose for $r_i\in \Sigma, r_i\geq 0, i>0$ (which we choose later) representations: $g^{k_i+l}r_i+g^{k_i} f\in T$. Furthermore, let $\kappa ,k_0\in \mathbb{N}$, such that $g^{k_0+m}\kappa -g^{k_0} t\in T$. Then
\begin{align*}
&\kappa g^{k_i+k_0+m}\left(g^{l}(r_i-\frac{1}{\kappa })+f\right)=g^{k_i+k_0+m}\left(g^{l}(\kappa  r_i-1)+\kappa f\right)\\
&=(g^{k_0+m}\kappa -g^{k_0} t)(g^{k_i+l}r_i+g^{k_i} f)+g^{k_i+k_0}(ft-g^{l+m})+g^{k_i+k_0+l}r_i t\in T
\end{align*}
such that $r_{i+1}:=r_i-\frac{1}{\kappa }\in \Sigma$. We repeat this process until we obtain a $r_i$ less than $0$ and so there is a $N\in \mathbb{N}$, such that $g^{N} f\in T_S^h$, because we can eliminate potencies of $g$ with negative factor.
\end{proof}

\begin{proof}[\textbf{Proof of \ref{schm_hom}}]
The propositions show the theorem for the polynomial $g^2\in T_S^h$. The theorem for $g$ we obtain, when we apply the theorem with $g^2$ to $f$ or $gf$, depending on $f$ (if its degree is an even or odd multiple of the degree of $g$).
\end{proof}

\begin{example}
Special cases of this homogeneous version (\ref{schm_hom}):
\begin{itemize}
\item Let $S=\left\{(x_2^2-2x_1^2-2x_0^2)(x_2^2-x_1^2-x_0^2)^2\right\}$ and $g=x_2^2-x_1^2-x_0^2$. So for each homogeneous polynomial of even degree $f$, which is positive on $K:=\left\{X_0|x_2^2-2x_1^2-2x_0^2\geq 0\right\}$, exist a $N\in \mathbb{N}$, such that $g^N f=\sigma_0+\sigma_1 (x_2^2-2x_1^2-2x_0^2)(x_2^2-x_1^2-x_0^2)^2$ with $\sigma_0, \sigma_1 \in \mathbb{R}[X_0]^2_h$.
\item Schmüdgen's theorem: $g=x_0$, $K$ is compact in affine space, $K_0=\left\{X_0|x_0=0\right\}$, $S$ is homogenized with $x_0$ and contains furthermore $x_0$
\item Corollary 4.5. of \cite{S4}: We introduce a new variable $x_{-1}$ in the problem without changing $S$ or $f$, which shall be polynomials of even degree and we take $K=K_S$. We choose a corresponding $g$ and to obtain each $f$ of even degree, we homogenize $f$ with $x_{-1}$ to an adequate degree. After applying our theorem, we dehomogenize with $x_{-1}$ and we obtain a representation of the homogeneous polynomial $g^N f$ in the non-homogeneous preorden of $S$.
\item Generalization of \ref{pos_dem}: $K$ is whole space, $g$ is positive, $f$ is positive with an degree which is a multiple of the degree of $g$, thus $g^N f$ is a sum of squares for a sufficiently big $N$
\item When $K_S$ does not have an interior point, we can represent each homogeneous polynomial $f$ of adequate degree in the form $g^N f\in T_S^h$, when $g$ is zero on $K_S=K_0$. So when $g$ is homogeneous and non-negative and $S=\left\{-g\right\}$ we obtain for each homogeneous $f$ of adequate degree a $N\in \mathbb{N}$ and homogeneous sums of squares $\sigma_0$ y $\sigma_1$, such that $g=\frac{\sigma_0-fg^N}{\sigma_1}$.
\end{itemize}
\end{example}

\begin{comentario}\label{open} \begin{itemize}
\item With theorem 4.3.6. of \cite{M1} we have a different generalization of Schmüdgen's theorem. It motivates to generalize the archimedean property another time. This could be content of a future work.
\item To obtain representation for each degree, we can use sometimes irreducible components of $g$, such that the denominator is a product of the irreducible components of $g$. When it is not possible because of degrees or positivity, we can use sum of squares in the case of even degrees.
\end{itemize}
\end{comentario}

We will use these results to obtain a projective version of Putinar's theorem:

\begin{teorema}\label{put_hom}\textbf{(Projective version of Putinar's theorem)}
Let the conditions be as in \ref{schm_hom}, such that each polynomial of $S$ and $g$ (and so $f$) have even degree. If there exist a polynomial $g_- \in M_S^h$, such that $g_-(X_0)<0$, when $g(X_0)\leq 0$ and such that $K\subseteq \left\{X_0|g_-(X_0)>0\right\}$, then for each polynomial $f$, which is positive on $K$ and which has an degree which is a multiple of $d$, exist a $N\in \mathbb{N}$ with $g^N f\in M_S^h$.
\end{teorema}

\begin{proof}
We use a inductive property as in \ref{induct} and \ref{put_proy1}, to reduce the problem to the case of the polynomial $p_-$. The case of one polynomial follows from Schmüdgen's theorem, because $M_S^h$ and $T_S^h$ are identical in this case.\\ 
For the inductive property, we consider the set $\left\{X_0| g(X_0)=1, g_-(X_0)\geq 0\right\}$, which is bounded in $\mathbb{R}^{n+1}$, because if there would be a sequence with arbitrary big distance of $X_0$ to the origin, then the projection induces a sequence on the unit sphere, which has, with the compactness, a convergent subsequence with a limit $P$ with $g(P)=0$ and $g_-(P)\geq 0$, which is not possible. So we find a sufficiently big $M\in \mathbb{N}$, such that $K_{g_-}:=\left\{X_0|g(X_0)=1, \sum x_i^2\leq M\right\}\supseteq \left\{X_0| g(X_0)=1, g_-(X_0)\geq 0\right\}$. We use $K_{g_-}$ with opposite points identified and the algebra of polynomials which only have monomials of degrees, which are multiples of $d$, for the theorem of Stone-Weierstra\ss\ .\\
Before applying the inductive property, we multiply the $g_i$ with a potency of $x_0^2+\dots+x_n^2$ to a degree, which is a multiple of $2d$ and so we do not change the properties of the $g_i$ and we can also suppose that $f$ has a degree which is a multiple of $2d$ (if it is not we can multiply $f$ with $g$) such that we can homogenize later without problems. As in \ref{induct} we obtain $(g)f-\sigma_i g_i$, which is positive on the bigger semialgebraic set. Now we homogenize with $g$ as in \ref{put_proy1} and we obtain homogeneous polynomials which are also positive in the bigger projective semialgebraic set.
\end{proof}

\begin{comentario}
When $g$ is positive we obtain a generalization of \ref{put_proy1} because we can choose $g_-$ as $1$.
\end{comentario}

Like in the affine theorem of Putinar, we can find weaker properties, which ensure that there is such a polynomial $g_-$ like in the quadratic module.

\begin{proposicion}
When, in the conditions of \ref{schm_hom} with polynomials of even degree, for each point $P$ of $K_0\subset \mathbb{R}P^n$ exist a $g_i\in S$, such that $g_i(P)<0$, then there is a polynomial $p_-\in M_S^h$ as in the last theorem.
\end{proposicion}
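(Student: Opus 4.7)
The plan is to imitate the construction in Proposition \ref{ex_neg_inf}, working on the compact quotient $S^{op}$ (the unit sphere of $\mathbb{R}^{n+1}$ with antipodal points identified), on which any homogeneous polynomial of even degree is a well-defined continuous function. The set $L:=\{P\in S^{op}\mid g(P)\le 0\}$ is closed and hence compact, and at each $P\in L$ some $g_i\in S$ is strictly negative: either $P\in K_0$ and we use the hypothesis, or $g(P)<0$ forces $P\notin K_S$ so that again some $g_i(P)<0$ automatically.

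Using this, I would take the continuous non-negative functions $a_i(P):=\mathrm{dist}(P,\{g_i\ge 0\})$ on $S^{op}$, so that $a_i(P)>0$ precisely when $g_i(P)<0$. Then $\sum_i a_i g_i\le 0$ on $S^{op}$ with strict inequality on the compact set $L$, and by compactness there exists $c>0$ with $\sum_i a_i g_i\le -c$ on $L$. Stone-Weierstra\ss\ applied to the algebra of polynomials with only even-degree monomials on $S^{op}$ then yields homogeneous polynomials $b_i$ close enough to $\sqrt{a_i}$ that $\sum_i b_i^2 g_i<-c/2$ on $L$. Multiplying each $b_i^2$ by a suitable power of $x_0^2+\dots+x_n^2$ equalises the degrees and produces homogeneous sums of squares $\sigma_i$ such that $\sum_i \sigma_i g_i$ is homogeneous of some common even degree $D$, and in particular lies in $M_S^h$.

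The main obstacle is ensuring strict positivity on $K$: since $g_i\ge 0$ on $K\subseteq K_S$, the combination $\sum_i\sigma_i g_i$ is only a priori $\ge 0$ on $K$ and could vanish at points where every $g_i$ vanishes simultaneously. To fix this I would add a small multiple of the sum of squares $(x_0^2+\dots+x_n^2)^{D/2}$, setting
$$g_-:=\epsilon\,(x_0^2+\dots+x_n^2)^{D/2}+\sum_i\sigma_i g_i\in M_S^h.$$
On $S^{op}$ the extra term equals $\epsilon$, so for $\epsilon<c/2$ one still has $g_-<0$ on $L$ while $g_-\ge\epsilon>0$ on $K$. Since $g_-$ is homogeneous of even degree, these sphere-level inequalities translate to $g_-(X_0)<0$ whenever $g(X_0)\le 0$ and $g_->0$ on $K$, giving the polynomial required by Theorem \ref{put_hom}.
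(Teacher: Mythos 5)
Your proposal is correct and follows essentially the same approach as the paper: apply the construction of Proposition~\ref{ex_neg_inf} via Stone--Weierstra\ss\ to obtain $\tilde g_-\in M_S^h$ strictly negative on the compact set $\{g\le 0\}$ (using the hypothesis on $K_0$ together with $g_i<0$ off $K_S$), then add a small multiple of $(x_0^2+\dots+x_n^2)^k$ to force strict positivity on $K$ while preserving negativity on $\{g\le 0\}$ by compactness. The only cosmetic difference is that you apply Stone--Weierstra\ss\ on all of $S^{op}$ and only check the resulting bound on $L$, whereas the paper phrases it as applying it directly on the subset $\{g\le 0\}$; both yield the same global polynomial approximants.
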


\begin{proof}
The proof is as in \ref{ex_neg_inf} with the theorem of Stone-Weierstra\ss, but we use it only for the subset $\left\{X_0|g(X_0)\leq 0\right\}$ of the projective space, such that we obtain a polynomial $\tilde{g}_-\in M_S^h$ negative on $\left\{X_0|g(X_0)\leq0\right\}$ (the condition in the theorem assures, that for each $P$ with $g(P)\leq 0$ there is a $g_i\in S$ with $g_i(P)<0$). To obtain, that $K\subseteq \left\{X_0|g_-(X_0)>0\right\}$, we see that $\tilde{g}_-\in M_S^h$ and so $\tilde{g}_- \geq 0$ on $K$, such that, if $2k$ is the degree of $\tilde{g}_-$, we only have to choose a sufficiently small $\epsilon>0$ to obtain $g_-:=\tilde{g}_- + \epsilon (x_0^2+\dots+x_n^2)^k<0$ on $\left\{X_0|g(X_0)\leq0\right\}$.
\end{proof}

\newpage

\section{Non-compact case}

\subsection{Case $n=1$ and $n\geq 3$}

Questions about sets in $K[x]$ and sets with dimension $3$ or higher are solved. We will only mention the results.

\begin{proposicion}[$n=1$, \cite{KM}]
Let $S$ be a finite set of polynomials, such that $K_S$ is not compact and we normalize the elements of $S$, such that the coefficients of their degree equal $-1$ or $1$. We describe a set of natural generators $T$ of $K_S$:
\begin{itemize}
\item When $K_S$ has a minimum $a$ then $x-a \in T$
\item When $K_S$ has a maximum $b$ then $b-x \in T$
\item When $K_S$ contains $a$ and $b$, such that $K_S\cap (a,b)=\emptyset$, so $(x-a)(x-b) \in T$
\item $T$ does not contain more elements which are necessary of the anterior conditions.
\end{itemize}
So $S$ is Putinar if and only if $T\subseteq S$.
\end{proposicion}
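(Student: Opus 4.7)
Write $K_S=I_1\cup\cdots\cup I_N$ as a disjoint union of closed intervals, at least one of which is a half-line by non-compactness of $K_S$. A direct check shows that the natural generators listed in $T$ carve out exactly this union, i.e.\ $K_T=K_S$. Since $T\subseteq S$ implies $M_T\subseteq M_S$, the forward implication reduces to the case $S=T$, where it suffices to prove that $M_T$ is saturated; the converse reduces to finding, for each missing $g^*\in T\setminus S$, some $f>0$ on $K_S$ with $f\notin M_S$.

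For the forward direction (with $S=T$) I would take $f>0$ on $K_S$ and factor $f=c\prod_i(x-\lambda_i)^{m_i}\prod_j Q_j(x)$ over $\mathbb{R}$, where each $Q_j$ is an irreducible quadratic and hence a sum of two squares. The real roots split into four cases: (a) roots in $K_S$ have even multiplicity, since $f>0$ forbids sign changes on $K_S$, and so contribute squares; (b) a root $\lambda<\min K_S=a$ uses $x-\lambda=(x-a)+(a-\lambda)\cdot 1^2$ with $a-\lambda>0$, so $x-\lambda\in M_T$; (c) a root $\lambda>\max K_S=b$ is treated symmetrically via $b-x$; (d) roots inside an interior gap $(a,b)$ come in even number by a sign count on the two adjacent components of $K_S$, so they pair up. The key technical step is the pair case: for $\lambda,\mu\in(a,b)$ one seeks sums of squares $\sigma_0,\sigma_1$ with $(x-\lambda)(x-\mu)=\sigma_0+\sigma_1(x-a)(x-b)$. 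Taking $\sigma_1$ equal to a suitable constant in $(0,1)$ and solving for $\sigma_0$ gives a quadratic whose leading and constant coefficients are non-negative and whose discriminant is strictly negative (a short computation using $\lambda,\mu\in(a,b)$), hence a sum of squares.

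To complete the representation of $f$ one multiplies the factors. Note that $M_T$ is closed under multiplication by sums of squares, so the SOS blocks are absorbed freely; within a single-generator block the identity $(\sigma_0+\sigma_1 g)(\tau_0+\tau_1 g)=(\sigma_0\tau_0+\sigma_1\tau_1 g^2)+(\sigma_0\tau_1+\sigma_1\tau_0)g$, together with $g^2$ being a square, keeps the product in $M_T$. When several different natural generators are involved the assembly is more delicate, since for distinct gap generators $g_i,g_j$ one does not in general have $g_ig_j\in M_T$; I would then proceed by induction on the number of components of $K_S$, at each stage peeling off one interval and only absorbing those factors of $f$ that can be represented using the single new generator. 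This inductive assembly is the main obstacle, as the quadratic-module structure (rather than a preorder) of $M_T$ prevents a naive multiplication of representations.

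For the converse, given $g^*\in T\setminus S$ I would take the explicit witness $f=g^*+\varepsilon$ for $\varepsilon>0$ small; then $f>0$ on $K_S$. Assuming $f=\sigma_0+\sum_i\sigma_i g_i$ with $g_i\in S$, an analysis of degrees and of Taylor orders at a boundary point $p$ of $K_S$ associated to $g^*$ yields the contradiction: the normalisation of leading coefficients of the $g_i$, together with the fact that each $g_i$ either vanishes at $p$ to even order or couples $p$ to a far-away boundary point, makes it impossible for any combination in $M_S$ to reproduce the critical odd-order (or lowest-degree) Taylor term of $f$ at $p$. The model case $S=\{(x-a)^{3}\}$, $f=(x-a)+\varepsilon$ is illustrative: a degree count forces $\sigma_1=0$, leaving $f=\sigma_0$, but a degree-one polynomial is never a sum of squares.
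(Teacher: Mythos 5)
The paper itself does not prove this proposition: the ``proof'' environment consists solely of the citation \cite{KM}, so there is no in-paper argument to compare your attempt against. Judged on its own, your sketch has an essential gap in each direction. In the forward direction the setup is reasonable --- reduce to $S=T$, factor $f$, handle complex pairs and roots outside $K_S$, and represent a pair of roots inside a single gap via $(x-\lambda)(x-\mu)=\sigma_0 + c\,(x-a)(x-b)$ with a constant $c\in(0,1)$ (the discriminant in $c$ is indeed an upward parabola that dips below zero on a subinterval of $(0,1)$). But what you yourself label ``the main obstacle'' --- assembling blocks attached to distinct gap generators inside a quadratic module, where $g_i g_j\notin M_T$ in general and naive multiplication of representations leaves $M_T$ --- is precisely the nontrivial content of the Kuhlmann--Marshall theorem, and ``proceed by induction on the number of components, peeling off one interval at a time'' is a heading, not an argument. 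As written you have only proved the single-generator restriction of the forward direction.

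The converse sketch is weaker still. Your model case $S=\{(x-a)^3\}$, $f=(x-a)+\varepsilon$ works by a degree--parity count, but this does not transfer to a missing gap generator $g^*=(x-a)(x-b)$: there $g^*+\varepsilon$ has even degree and the parity/leading-coefficient obstruction is unavailable. In such cases the contradiction typically comes from different considerations --- for instance, after some degree bookkeeping, from the fact that elements of the relevant sub-quadratic-module are non-negative across the whole gap while $g^*+\varepsilon$ is strictly negative inside it, or from a genuine local (valuation) analysis that must treat \emph{both} boundary points $a$ and $b$ simultaneously. A one-line appeal to ``analysis of degrees and of Taylor orders'' does not establish that $g^*+\varepsilon\notin M_S$ for an arbitrary $S$ with $T\not\subseteq S$; one needs a case split between endpoint generators and gap generators, and an actual verification, case by case, that the chosen witness is excluded.
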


\begin{proof}
\cite{KM}
\end{proof}

\begin{example}
\begin{itemize}
\item Let $K_S=\left[0,1\right]\cup\left[2,\infty\right)$. So the natural generators of $K_S$ are $\left\{x, (x-1)(x-2)\right\}$.
\item Let $S=\left\{x\right\}$ and $S'=\left\{x, (x-1)^3\right\}$. So $S$ is Putinar and $S'$ is not Putinar and so the inductive property does not hold for non-compact sets.
\end{itemize}
\end{example}

\begin{proposicion}[$n\geq 3$, \cite{S2}]
Let $S$ be a finite set of polynomials, such that $K_S$ is non-compact and has dimension $3$ or higher. So exist a polynomial $f$ which is positive on $K_S$ and $f\notin T_S$.
\end{proposicion}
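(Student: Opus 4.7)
The plan is to adapt Scheiderer's strategy for negative Positivstellens\"atze in this setting. Two ingredients are needed: (i) a \emph{stability} statement saying that under the hypotheses there exists a function $c : \mathbb{N} \to \mathbb{N}$ such that every $f \in T_S$ with $\deg f \leq d$ admits a representation $f = \sum_{\alpha \in \{0,1\}^s} \sigma_\alpha g^\alpha$ satisfying $\deg(\sigma_\alpha g^\alpha) \leq c(d)$ for all $\alpha$; and (ii) a duality / moment-theoretic argument that converts stability into the existence of a positive $f \notin T_S$.

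For the stability step I would pass to the projective picture used in the earlier parts of the paper and study $K_S$ near infinity. The hypothesis that $K_S$ is non-compact of dimension at least $3$ forces the part at infinity of the projective closure of $K_S$ to be a semialgebraic set of real dimension at least $2$. In any hypothetical representation $f = \sum_\alpha \sigma_\alpha g^\alpha$, the highest-degree parts of the summands whose degree exceeds $\deg f$ must cancel; restricting those leading forms to the $2$-dimensional piece at infinity yields enough independent linear constraints, via the Habicht-style control on sums of squares that are non-negative on projective sets, to bound the overshoot in degree and produce the desired function $c$. The dimension hypothesis is essential here, since in dimensions $\le 2$ the part at infinity is too thin to drive this cancellation argument; this matches the positive results quoted later in dimension $1$ and for certain $2$-dimensional sets.

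Given stability, the cone $T_S \cap \mathbb{R}[X]_{\leq d}$ is the continuous image of a finite-dimensional closed cone of coefficient vectors of the $\sigma_\alpha$, hence is closed in $\mathbb{R}[X]_{\leq d}$. If every polynomial positive on $K_S$ lay in $T_S$, then Haviland's theorem applied degree-by-degree would give the strong moment property SMP recalled in the introduction. To contradict SMP I would construct a linear functional $L : \mathbb{R}[X] \to \mathbb{R}$ which is non-negative on $T_S$ but admits no representing Radon measure supported in $K_S$: take $L$ as a limit of suitably weighted point evaluations along an unbounded sequence $p_k \in K_S$ escaping to infinity inside the $2$-dimensional tentacle, with the weights engineered so that $L$ is finite on all squares (hence on $T_S$) but the induced moment sequence is incompatible with any measure concentrated on $K_S$. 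Any $f$ positive on $K_S$ with $L(f) < 0$ then witnesses $f \notin T_S$. The main obstacle is the stability step, which is the technical heart: converting the geometric hypothesis $\dim K_S \geq 3$ and non-compactness into a uniform degree bound requires delicate quantitative analysis of leading forms, whereas the moment-theoretic finish is comparatively routine once stability is available.
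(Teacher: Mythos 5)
The paper does not prove this proposition; the proof reads simply ``\cite{S2}'', deferring entirely to Scheiderer's 2000 paper. So there is no proof in the paper to compare yours against, and the question is whether your proposed argument would in fact work.

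Your route is definitely not the one in \cite{S2}. That paper predates the stability machinery you invoke: Scheiderer's stability/SMP theorem is \cite{S3} (2005) and Netzer's stability criteria are \cite{N} (2009), whereas \cite{S2} is from 2000 and proceeds differently (essentially by reducing to the case of surfaces and exploiting structure of sums of squares on real algebraic varieties of dimension $\geq 2$ with noncompact real locus). So you are proposing a genuinely alternative strategy. Unfortunately it has two real gaps, the first of which is fatal as written.

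The central gap is your claim that the hypotheses force stability. Nothing in your sketch actually establishes this: ``restricting leading forms to the $2$-dimensional piece at infinity yields enough independent linear constraints, via Habicht-style control, to bound the overshoot'' is a hope, not an argument. Habicht's theorem is about \emph{positive definite} forms and gives a representation with a sum-of-squares denominator; it does not supply degree bounds in a representation inside a preordering, and it gives no control over cancellation among the leading forms of the summands $\sigma_\alpha g^\alpha$. There is moreover a structural warning sign that something essential is missing: the implication you want to use in the second step (``stable $+$ noncompact $+$ $\dim \geq 2$ $\Rightarrow$ not SMP $\Rightarrow$ not saturated'') applies already in dimension $2$. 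If your stability argument went through for \emph{all} noncompact sets of dimension $\geq 3$ without using that hypothesis in a precise way, a variant of it would likely also apply in dimension $2$, and that would contradict the positive $2$-dimensional results of Marshall, Nguyen--Powers and Ha--Ho recorded later in Section~4 of this very paper. You acknowledge the dimension hypothesis ``is essential'' but the one sentence you offer (``the part at infinity is too thin'' in dimension $\leq 2$) does not constitute a place where $\dim \geq 3$ actually enters the estimates. Until stability is honestly derived from the hypotheses, the argument has no foundation.

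The second, smaller gap is in the moment-theoretic finish. Deducing SMP from saturation via Haviland is fine and in fact does not even need closedness of $T_S \cap \mathbb{R}[X]_{\leq d}$. But the construction of the functional $L$ (``a limit of suitably weighted point evaluations along an unbounded sequence, with weights engineered so that $L$ is finite on all squares but incompatible with any measure on $K_S$'') is precisely the content of the theorem you are trying to prove, not a routine step. Making such an $L$ nonnegative on all of $T_S$ --- not merely on sums of squares --- while simultaneously failing to be a moment functional for $K_S$ is exactly where stability is used in Scheiderer's \cite{S3} argument, via closedness of the truncated cones and a separating-hyperplane argument, and it is not ``comparatively routine''. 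As it stands, both halves of your proposal restate the difficulty rather than resolve it.
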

\begin{proof}
\cite{S2}
\end{proof}

\begin{comentario}
Here the dimension is a condition for the set $K_S$. We claim, that the dimension shall be higher than $2$ for arbitrary large distances of the origin.
\end{comentario}

\subsection{Stability}

Schmüdgen's theorem holds, if the set is compact or in projective language $K$ and $K_0=\left\{X_0|x_0=0\right\}$ do not have common points. In the non-compact case we consider the question, when we have common points, lines or sets of higher dimension. If the set is sufficiently big the denominators, which are potencies of $x_0$ have to appear also in the sums of squares as factor, if we can "`measure"' the multiplicity of $x_0$ on the common set. In affine language this means, that we have a relation between $f$ and the summands in the representation, which has the name stability.

\begin{defi}
A quadratic module $M_S$ is stable, if for every $d$ we find a $d'$, such that for each $p\in M_S$ it has a presentation in $M_S$ with $deg(\sigma_i p_i)\leq d'$. 
\end{defi}

Scheiderer \cite{S3} has proved that quadratic modules, which are not stable, do not have SMP and so affine representations without denominators do not exist. So we ask, when quadratic modules are stable, to obtain negative results. In a paper of 2009 Netzer \cite{N} proved that some sets of tentacles are stable with the use of different graduations. The graduations are important, because they change the appearance of the infinity.

\begin{defi}
Let $S$ be a finite set of polynomials. We say that $K_S$ contains a standard tentacle $T_{K,z}:=\left\{(\lambda^{z_1}x_1,\dots,\lambda^{z_n}x_n)|\lambda\geq 1, (x_1,\dots,x_n)\in K\right\}$ for a compact set with an interior point $K$ and direction $z=(z_1,\dots,z_n)$, when $T_{K,z}\subseteq K_S$.
\end{defi}

\begin{teorema}
When $K_S$ contains standard tentacles with directions $z^{(1)},\dots,z^{(m)}$, such that exist $r_1,\dots, r_m\in \mathbb{N}$ with $r_1 z^{(1)}+\dots+r_m z^{(m)}\succ 0$, then $M_S$ is stable. These numbers exist if and only if there are no bounded polynomials on $K_S$ except constants.\\
The relation $\succ$ signifies that each component of the vector on the left side is bigger than the corresponding component on the right side.
\end{teorema}

\begin{proof}
We will show only the first part for $n=2$ with one or two tentacles (when we have (in the case $n=2$) more than $2$ tentacles like in the theorem, we have also $2$ tentacles with the property). A complete proof you can find in \cite{N} or \cite{NM}.\\
Let $p\in M_S$ be of degree $d>0$ and $\sigma_i q_i$ a summand in a representation of $p$ with degree $d'>0$.\\ First we suppose, that we have one tentacle $T_{K,z}$ with $z_1>0$ and $z_2>0$ and without loss of generality $z_1\geq z_2$. We substitute $x_0 \lambda^{z_1}$ for $x$ and $y_0\lambda^{z_2}$ for $y$ and we consider $s(\lambda):=p$, $t(\lambda):=\sigma_i q_i$ as polynomials in $\lambda$ with coefficients in $x_0$ and $y_0$. With this substitution it is not possible, that some parts cancel, because we did not substitute special values in $x_0$ and $y_0$ such that $d'\cdot z_2=deg(\sigma_i q_i)\cdot z_2\leq deg_{\lambda}(t)$. The coefficient of the highest degree of $t$ does not equal zero in $K$ because $K$ contains a interior point. So we can choose special values for $x_0$ and $y_0$ without reducing the degree of $t$. This monomial decides the sign of $t$, when $\lambda$ goes to infinity and so it is positive, because the corresponding trajectory is in $T_{K,z}$. The summands $\sigma_j g_j$ are non-negative on the trajectory, so we have $deg_{\lambda}(t)\leq deg_{\lambda}(s)$. Thus, with $d\cdot z_1=deg(p)\cdot z_1 \geq deg_{\lambda}(s)$ we obtain $d'\leq d\cdot \frac{z_1}{z_2}$.\\
In the case of two tentacles we have $z^{(1)}=(z_1^{(1)},z_2^{(1)})$ and $z^{(2)}=(z_1^{(2)},z_2^{(2)})$ and without loss of generality $z_1^{(1)}>0$, $z_2^{(1)}\leq 0$, $z_2^{(2)}>0$, $z_1^{(2)}\leq 0$ and $r_1, r_2 \in \mathbb{N}$ with $r_1 z^{(1)}+r_2 z^{(2)}\succ 0$. We follow the case of one variable in both tentacles and we obtain from $p$ and $\sigma_i q_i$ the polynomials $s_1$, $s_2$, $t_1$ and $t_2$, which correspond to the tentacles like above. So we have $deg_{\lambda}(s_1)\geq deg_{\lambda}(t_1)$, $deg_{\lambda}(s_2)\geq deg_{\lambda}(t_2)$ (which can be negative) and so $r_1 deg_{\lambda}(s_1)+r_2 deg_{\lambda}(s_2)\geq r_1 deg_{\lambda}(t_1)+r_2 deg_{\lambda}(t_2)$. Furthermore, we have $z_1^{(1)} d\geq deg_{\lambda}(s_1)$ and $z_2^{(2)} d\geq deg_{\lambda}(s_2)$ and thus $(r_1 z_1^{(1)} + r_2 z_2^{(2)})d \geq r_1 deg_{\lambda}(s_1)+r_2 deg_{\lambda}(s_2)$. We also have $r_1 deg_{\lambda}(t_1)+r_2 deg_{\lambda}(t_2)\geq \min(r_1 z_1^{(1)}+r_2 z_1^{(2)}, r_1 z_2^{(1)}+r_2 z_2^{(2)}) d'$, because the right-hand side is the minimum $\lambda$-degree of $q^{r_1}$ with $z^{(1)}$-grading times $q^{r_2}$ with $z^{(2)}$-grading, for any polynomial $q$ of degree $d'$ where highest degree parts are not cancelling out and the left-hand side is the case $q=\sigma_i q_i$. When we combine the results, we also obtain a linear inequality for $d'$ dependent on $d$ and constants.
\end{proof}

\begin{example}
To illustrate the result we present examples, which come from \cite{N}:
\begin{center}
\begin{tabular}{ll}
$0.5x\leq y\leq x, x\geq 0$ \hspace{1cm} & $0.5 x^2\leq y\leq x^2, x\geq 0$\\
tentacle $(1,1)\succ 0$ & tentacle $(2,1)\succ 0$\\
& \\
\begin{tikzpicture}[scale=0.5]
\draw[thick] (-3,0) -- (3,0);
\draw[thick] (0,-2) -- (0,2);
\draw[color=black, domain=-2:2] plot (\x, \x);
\draw[color=black, domain=-1.5:1.5] plot (2*\x, \x);
\draw[pattern=north west lines] (0,0) --(2,2) -- (3,2) -- (3,1.5);
\end{tikzpicture} &
\begin{tikzpicture}[scale=0.5]
\draw[thick] (-3,0) -- (3,0);
\draw[thick] (0,-2) -- (0,2);
\draw[color=black, domain=-1.41:0] plot (\x, -\x^2);
\draw[color=black, domain=0:1.41] plot (\x, \x^2);
\draw[color=black, domain=-2:0] plot (\x,0.5*-\x^2);
\draw[color=black, domain=0:2] plot (\x,0.5*\x^2);
\draw[pattern=north west lines] plot[smooth,domain=0:1.41](\x, \x^2) --plot[smooth,domain=2:0](\x, 0.5*\x^2);
\end{tikzpicture}
\end{tabular}
\end{center}

\begin{center}
\begin{tabular}{ccc}
$(x-1)(y-1)\leq 1, x\geq 0, y\geq 0$& $(x-1)y\leq 1, x\geq 0, y\geq 0$& $x^2y\leq 1, xy\geq -1, x\geq 0, (1-x)y\geq 0$\\
tentacles $(1,0), (0,1)$ & tentacles $(0,1), (1,-1)$  & tentacles $(-1,2), (1,-1)$  \\
$(1,0)+(0,1)=(1,1)\succ 0$ & $2\cdot (0,1)+(1,-1)=(1,1)\succ 0$  & $2\cdot (-1,2)+3\cdot(1,-1)=(1,1)\succ 0$  \\
& & \\
\begin{tikzpicture}[scale=0.25]
\draw[thick] (-6,0) -- (6,0);
\draw[thick] (0,-4) -- (0,4);
\draw[color=black, domain=0.33:5] plot (1+\x,1+\x^-1);
\draw[color=black, domain=-7:-0.2] plot (1+\x,1+\x^-1);
\draw[pattern=north west lines] plot[smooth,domain=0.33:5](1+\x, \x^-1 +1) -- (6,0) -- (0,0) -- (0,4);
\end{tikzpicture}&
\begin{tikzpicture}[scale=0.25]
\draw[thick] (-6,0) -- (6,0);
\draw[thick] (0,-4) -- (0,4);
\draw[color=black, domain=0.25:5] plot (1+\x,\x^-1);
\draw[color=black, domain=-7:-0.25] plot (1+\x,\x^-1);
\draw[pattern=north west lines] plot[smooth,domain=0.25:5](1+\x, \x^-1) -- (6,0) -- (0,0) -- (0,4);
\end{tikzpicture}&
\begin{tikzpicture}[scale=0.25]
\draw[thick] (-6,0) -- (6,0);
\draw[thick] (0,-4) -- (0,4);
\draw[color=black, domain=-6:-0.5] plot (\x,-\x^-2);
\draw[color=black, domain=0.5:6] plot (\x,\x^-2);
\draw[color=black, domain=-6:-0.25] plot (\x,-\x^-1);
\draw[color=black, domain=0.25:6] plot (\x,-\x^-1);
\draw[color=black, domain=-4:4] plot (1,\x);
\draw[pattern=north west lines] plot[smooth,domain=0.5:1](\x, \x^-2) -- (1,0) -- (0,0) -- (0,4);
\draw[pattern=north west lines] plot[smooth,domain=1:6](\x, -\x^-1) -- (6,0) -- (1,0) -- (1,-1);
\end{tikzpicture}
\end{tabular}
\end{center}
\end{example}

There are more propositions for SMP, which imply a negative answer for the question of positive polynomials, when it is negative for SMP. For example, we can mention a theorem of Schmüdgen of 2003, which shows that one can reduce the question of SMP to fibres of bounded polynomials. In \cite{NM} are also new results, which consider more general tentacles. 

\subsection{Cylinders}

In 2008 Marshall \cite{M2} proved, that each polynomial, which is non-negative on the strip $[0,1]\times \mathbb{R}$, is in the quadratic module generated by $x(1-x)$. The tools are very technical. Nguyen and Powers \cite{NP} generalize these results of Marshall and show a theorem when we change the interval $[0,1]$ with a compact set of dimension $1$ with natural generators:

\begin{teorema}\label{cil}
Let $U$ be a compact set of $\mathbb{R}$ and $S$ be the set of its natural generators. Thus, each polynomial $f$, which is non-negative on $U\times \mathbb{R}$, is in the quadratic module generated by $S$.
\end{teorema}

Furthermore, Nguyen and Powers \cite{NP} use three techniques to generalize the result. We will explain two techniques and we will present them a bit more general, to obtain more examples, but we use the same proof.

\begin{proposicion}{\textbf{(Elimination of squares)}}\\
Let $U$ be a subset of $\mathbb{R}^2$, which is symmetric with respect to the $x$-axis, which has a finite set of generators $S$ of the form $g_i(x,y^2)$ and which has the property that each positive (non-negative) polynomial on $U$ is in the cuadratic module $M_S$. Thus, each polynomial positive (non-negative) on $U^+:=K_{S'}$, which we describe with $S':=\left\{g_i(x,y)\right\}\cup \left\{y\right\}$, is in $T_{S'}$.
\end{proposicion}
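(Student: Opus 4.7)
The plan is to lift the problem from $U^+$ to $U$ via the substitution $y\mapsto y^2$, invoke the hypothesis on $U$, and then descend back by extracting the even part in $y$.

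First I would verify the key geometric equivalence: since the generators of $U$ have the form $g_i(x,y^2)$, the point $(x,y)$ lies in $U$ iff $g_i(x,y^2)\geq 0$ for all $i$, which is precisely the condition that $(x,y^2)\in U^+$ (the inequality $y^2\geq 0$ being automatic). Consequently, if $f$ is positive (resp.\ non-negative) on $U^+$, then $\tilde f(x,y):=f(x,y^2)$ is positive (resp.\ non-negative) on $U$. Applying the hypothesis, we obtain
\[
f(x,y^2)=\sigma_0(x,y)+\sum_{i=1}^{s}\sigma_i(x,y)\,g_i(x,y^2), \qquad \sigma_j\in\sum\mathbb{R}[x,y]^2.
\]

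The crucial algebraic step is a structural lemma about even parts of sums of squares. Writing an arbitrary polynomial $p(x,y)$ as $p(x,y)=p_e(x,y^2)+y\,p_o(x,y^2)$, a direct computation shows that the even part (in $y$) of $p^2$ equals $p_e(x,y^2)^2+y^2\,p_o(x,y^2)^2$. Summing over finitely many squares, the even part of any $\sigma\in\sum\mathbb{R}[x,y]^2$ has the form $\tau(x,y^2)+y^2\rho(x,y^2)$, where $\tau$ and $\rho$ are sums of squares in a fresh pair of indeterminates $\mathbb{R}[x,z]$. This ``even-part shape'' is the only non-obvious ingredient and is the main (modest) obstacle; once phrased this way it is a one-line check.

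Finally, I would take the even part in $y$ of the lifted identity. Since $f(x,y^2)$ and each $g_i(x,y^2)$ are already even in $y$, only the $\sigma_j^{\mathrm{even}}$ survive, and substituting $\sigma_j^{\mathrm{even}}(x,y)=\tau_j(x,y^2)+y^2\rho_j(x,y^2)$ produces an identity in which both sides are polynomials in $x$ and $y^2$. Renaming $y^2$ as a new variable and relabelling it $y$ yields the polynomial identity
\[
f(x,y)=\tau_0+y\,\rho_0+\sum_{i=1}^{s}\tau_i\,g_i(x,y)+\sum_{i=1}^{s}\rho_i\cdot\bigl(y\,g_i(x,y)\bigr),
\]
which exhibits $f$ as an element of the preordering $T_{S'}$ generated by $S'=\{g_i(x,y)\}\cup\{y\}$. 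The terms $\rho_i\cdot(y\,g_i)$ pair a sum of squares with the product of two distinct generators, which is allowed in $T_{S'}$ but not in $M_{S'}$; this is exactly why the conclusion is stated for $T_{S'}$ rather than for $M_{S'}$. The non-negative case is handled by the same argument without any modification.
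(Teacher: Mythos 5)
Your proof is correct and follows essentially the same approach as the paper: lift $f$ to $f(x,y^2)$, apply the hypothesis to obtain a quadratic-module representation, and extract the even part in $y$ (the paper does this by averaging over $y \mapsto -y$, you do it by decomposing each square into even and odd parts — the same operation), then substitute back to get a preordering representation in $T_{S'}$.
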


\begin{proof}
Let $f$ be positive (non-negative) on $U^+$. So $f(x,y^2)> 0$ ($\geq 0$) on $U$ and $f(x,y^2)\in M_S$. So $f(x,y^2)=\sum_i{\sum_j s_{i,j}^2 g_i(x,y^2)}$ ($g_0:=1$). Thus, $$f(x,y^2)=\frac{f(x,y^2)+f(x,(-y)^2)}{2}=\frac{\sum_i{\sum_j s_{i,j}(x,y)^2 g_i(x,y^2)}+\sum_i{\sum_j s_{i,j}(x,-y)^2 g_i(x,y^2)}}{2}$$
$$=\frac{\sum_i{\left(\sum_j{ s_{i,j}(x,y)^2 + s_{i,j}(x,-y)^2}\right) g_i(x,y^2)}}{2}.$$
When we write $s_{i,j}(x,y)=\hat{s}_{i,j}(x,y^2)+y\check{s}_{i,j}(x,y^2)$ we have $s_{i,j}(x,y)^2 + s_{i,j}(x,-y)^2=\hat{s}_{i,j}(x,y^2)^2+y^2\check{s}_{i,j}(x,y^2)^2$.
Thus $f(x,y^2)=\frac{1}{2}\sum_i{\left(\sum_j{ \hat{s}_{i,j}(x,y^2)^2 + y^2\check{s}_{i,j}(x,y^2)}\right) g_i(x,y^2)}$ and \\
$f(x,y)=\frac{1}{2}\sum_i{\left(\sum_j{ \hat{s}_{i,j}(x,y)^2 + y\check{s}_{i,j}(x,y)}\right) g_i(x,y)}$.
\end{proof}

\begin{proposicion}{\textbf{(Automorphisms of $\mathbb{R}[x,y]$)}}\\
The automorphisms of $\mathbb{R}[x,y]$ are generated by a finite number of affine transformations and transformations of the form $x\mapsto x, y\mapsto y+q(x)$ with $q(x)\in \mathbb{R}[x]$.
\end{proposicion}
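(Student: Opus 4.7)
The plan is to prove this assertion, the classical Jung--van der Kulk theorem, by induction on $N := \max(\deg \phi(x), \deg \phi(y))$. Set $f := \phi(x)$ and $g := \phi(y)$. In the base case $N \leq 1$ both coordinates are of the form $a + bx + cy$, so $\phi$ is already affine. For the inductive step, assume $N \geq 2$. After possibly post-composing with the affine swap $x \leftrightarrow y$ I may assume $n := \deg g \geq m := \deg f \geq 1$.

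The key tool is the Jacobian criterion: since $\phi$ is an automorphism, the determinant $J := f_x g_y - f_y g_x$ is a nonzero constant. Its top-degree homogeneous component, of degree $m+n-2$, is the Jacobian of the leading forms $(f_m)_x (g_n)_y - (f_m)_y (g_n)_x$, which must vanish as soon as $m + n \geq 3$. Vanishing of this Jacobian forces the homogeneous polynomials $f_m, g_n \in \mathbb{R}[x,y]$ to be algebraically dependent; combined with unique factorization, this yields a common homogeneous polynomial $h$, coprime positive integers $a,b$, and nonzero constants $c,d$ such that $f_m = c h^b$ and $g_n = d h^a$, with $\deg h = \gcd(m,n)$, $m = b \deg h$, and $n = a \deg h$.

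The combinatorial heart of the argument, and the step I expect to be the main obstacle, is to rule out $m \nmid n$, i.e., to show $b = 1$. Knowing only that the top homogeneous part of $J$ vanishes is not enough; one must use that \emph{every} positive-degree piece of $J$ vanishes, because $J$ is constant. A weighted-degree analysis, weighting $x$ and $y$ so that $h$ becomes isobaric, shows that if $\min(a,b) \geq 2$ then the second-highest piece of $J$ cannot also vanish, yielding a contradiction. This is the classical hard lemma of Jung--van der Kulk.

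Granting $b = 1$, we obtain $g_n = \mu f_m^{n/m}$ for some nonzero $\mu \in \mathbb{R}$. Define the elementary transformation $\tau$ by $\tau(x) = x$ and $\tau(y) = y + \mu x^{n/m}$, and set $\tilde\phi := \phi \circ \tau^{-1}$, so that $\tilde\phi(x) = f$ and $\tilde\phi(y) = g - \mu f^{n/m}$. By construction $\deg(g - \mu f^{n/m}) < n$, so $\tilde\phi$ is an automorphism of strictly smaller $N$. By the inductive hypothesis $\tilde\phi$ is a composition of affine and elementary transformations, hence so is $\phi = \tilde\phi \circ \tau$; undoing the initial affine swap, if it was used, preserves this property and completes the induction.
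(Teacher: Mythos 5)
The paper itself offers no proof here --- it merely cites McKay and Wang \cite{MW}. You are attempting an independent proof of the Jung--van der Kulk theorem along the Jacobian route. The skeleton is correct: induct on $\max(\deg\phi(x),\deg\phi(y))$; use that the Jacobian determinant of an automorphism is a nonzero constant; deduce from the vanishing of its top-degree homogeneous piece that the leading forms satisfy $f_m = ch^b$, $g_n = dh^a$ with $\gcd(a,b)=1$; prove $b=1$, i.e.\ $m \mid n$; subtract off an elementary automorphism $y\mapsto y+\mu x^{n/m}$ to drop $\deg g$; repeat. All of this is the classical plan, and your composition bookkeeping for $\tilde\phi = \phi\circ\tau^{-1}$ is right.

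But you have explicitly declined to prove the one step where the theorem actually lives: the claim that $b=1$. Your gesture --- weight $x,y$ so that $h$ becomes isobaric, and then if $\min(a,b)\geq 2$ the second-highest piece of $J$ cannot vanish --- is not an argument, and the specific claim about the second-highest piece of $J$ is false as stated. The degree-$(m+n-3)$ homogeneous component of $J=f_xg_y-f_yg_x$ equals $(f_m)_x(g_{n-1})_y+(f_{m-1})_x(g_n)_y-(f_m)_y(g_{n-1})_x-(f_{m-1})_y(g_n)_x$, which involves the subleading forms $f_{m-1}$ and $g_{n-1}$; these are unconstrained, so nothing about the leading forms alone forbids its vanishing. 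The genuine proof that $m\mid n$ --- whether via McKay--Wang's degree inequality for a generating pair (which avoids the Jacobian entirely), Abhyankar--Moh's epsilon lemma, or Makar-Limanov's Poisson-bracket filtration analysis --- is a sustained multi-step argument, not a single appeal to weighted degrees. Until that lemma is supplied, your proposal is a correct outline with a hole at precisely its crux.
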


\begin{proof}
\cite{MW}
\end{proof}

Automorphisms are compatible with the property of positivity and so this proposition says, that we can use these substitutions to each set to obtain new examples. So we obtain:

\begin{teorema}
Let $U$ be a compact set of $\mathbb{R}$, $S$ the set of its natural generators and $q(x)\in \mathbb{R}[x]$. Thus, every polynomial $f$ which is non-negative on $U\times \mathbb{R}\cap\left\{(x,y)|q(x)\geq 0\right\}$ is in the preorden defined by $S\cup \left\{y-q(x)\right\}$.
\end{teorema}

\begin{proof}
The elimination of squares apply to the theorem \ref{cil} because the elements of $S$ do not depend on $y$. Then we can change $y$ with a $q(x)$ because of the compatibility with automorphisms.
\end{proof}

\begin{example}
With $q(x)=x^2$ we obtain for example: $x\geq 0, 1-x\geq 0$ and $y-x^2\geq 0$ with the picture:
\begin{center}
\begin{tikzpicture}[scale=0.5]
\draw[thick] (-3,0) -- (3,0);
\draw[thick] (0,-2) -- (0,2);
\draw[color=black, domain=-2:2] plot (1, \x);
\draw[color=black, domain=-1.41:1.41] plot (\x,\x^2);
\draw[pattern=north west lines] plot[smooth,domain=0:1](\x, \x^2) --(1,2) -- (0,2)-- (0,0);
\end{tikzpicture}
\end{center}
Also we can apply the automorphisms in $x$ and we obtain for example with $q(y)=y^2$:\\ $x-y^2\geq 0 $ and $1-x+y^2\geq 0$:
\begin{center}
\begin{tikzpicture}[scale=0.5]
\draw[thick] (-3,0) -- (3,0);
\draw[thick] (0,-2) -- (0,2);
\draw[color=black, domain=-1.73:0] plot (-\x^2, \x);
\draw[color=black, domain=0:1.73] plot (\x^2, \x);
\draw[color=black, domain=-1.41:0] plot (-\x^2+1, \x);
\draw[color=black, domain=0:1.41] plot (\x^2+1, \x);
\draw[pattern=north west lines] plot[smooth,domain=-1.73:0](-\x^2, \x) -- plot[smooth,domain=0:1.73] (\x^2, \x) -- plot[smooth,domain=1.41:0] (\x^2+1, \x) -- plot[smooth,domain=0:-1.41] (-\x^2+1, \x);
\end{tikzpicture}
\end{center}
\end{example}

In the article \cite{MW} is another substitution technique which comes from Scheiderer. Because of that substitution, we have denominators in the representation, but one can cancel them like in the part of stability.

\subsection{Non-degenerated sets}

In a new work of Vuy and Toan \cite{VT}, there is a Positivstellensatz, which applies to non-degenerated sets. The idea of the paper is searching transformations, which transform non-compact sets to compact sets, apply the theorem for the compact set and write the theorem in the previous conditions. The transformations, which have been chosen, are linear transformations between the exponents such that one needs a condition to assure, that an inverse map exists in the natural numbers. They can assure this with unimodularity at least for polynomials, which are bounded on the set, and when the set is non-degenerated. The property to be non-degenerated, one can express with a relation between the growth of the polynomials on the set and their exponents.\\
The theorem they translate, to achieve new examples in dimension 2, is a theorem of Scheiderer \cite{S5}. Here we will present a result, which already has the property to be non-degenerated:

\begin{defi}
Let $\alpha_1,\dots, \alpha_k$ be a finite set of vectors of $\mathbb{N}_0^2$ and let $r_1,\dots, r_k \in \mathbb{R}^+$. A semialgebraic set of the form
$$X^{2\alpha_1}\leq r_1, \dots, X^{2\alpha_k}\leq r_k$$
is called logarithmic polyhedron of dimension $2$.
\end{defi}

\begin{defi}
Let $C:=\left\{\lambda_1 \alpha_1+\dots +\lambda_k \alpha_k| \lambda_1,\dots, \lambda_k\geq 0\right\}$ be the cone in $\mathbb{R}_+^2$ defined by $\alpha_1,\dots, \alpha_k$. If exist $\beta_1, \beta_2\in \mathbb{N}_0^2$ with $det(\beta_1 \beta_2)=1$ and which generate the cone we call $C$ unimodular.
\end{defi}

\begin{teorema}
Let $S$ be a set of polynomials, which describe a logarithmic polyhedron of dimension $2$ like in the definition. If the cone defined by the vectors of the exponents of $S$ is unimodular and when we do not have $3$ curves $X^{2\alpha_i}= r_i$ which intersect in one point, then each polynomial $f$, which is non-negative on $K_S$, is in $T_S$.
\end{teorema}

\begin{proof}
\cite{VT}
\end{proof}

\begin{example} We will apply the elimination of squares to the example of the paper of Vuy and Toan to obtain more examples:\\
The example of the paper is $y^2\leq 1, x^2 y^2\leq 1$. The corresponding cone is defined by $(0,2)$ and $(2,2)$ and to show that it is unimodular we can choose $(1,1)$ and $(0,1)$. Thus, with the theorem, each non-negative polynomial on the set is in the corresponding preorden.
\begin{center}
\begin{tikzpicture}[scale=0.5]
\draw[thick] (-3,0) -- (3,0);
\draw[thick] (0,-2) -- (0,2);
\draw[color=black, domain=-3:3] plot (\x, 1);
\draw[color=black, domain=-3:3] plot (\x, -1);
\draw[color=black, domain=-3:-0.5] plot (\x,\x^-1);
\draw[color=black, domain=0.5:3] plot (\x,\x^-1);
\draw[color=black, domain=-3:-0.5] plot (\x,-\x^-1);
\draw[color=black, domain=0.5:3] plot (\x,-\x^-1);
\draw[pattern=north west lines] plot[smooth,domain=-3:-1](\x, -\x^-1) --(1,1) -- plot[smooth,domain=1:3] (\x, \x^-1) -- plot[smooth,domain=3:1] (\x, -\x^-1) -- (-1,-1) -- plot[smooth,domain=-1:-3] (\x, \x^-1);
\end{tikzpicture}
\end{center}

With the elimination of squares we also obtain:\\

\begin{tabular}{lll}
$y\leq 1, x^2 y\leq 1, y\geq 0$ \hspace{1cm} & $y^2\leq 1, x y^2\leq 1, x\geq 0$\hspace{1cm} & $y\leq 1, x y\leq 1, y\geq 0, x\geq 0$\\
& & \\
\begin{tikzpicture}[scale=0.5]
\draw[thick] (-3,0) -- (3,0);
\draw[thick] (0,-2) -- (0,2);
\draw[color=black, domain=-3:3] plot (\x, 1);
\draw[color=black, domain=-3:-0.707] plot (\x,-\x^-2);
\draw[color=black, domain=0.707:3] plot (\x,\x^-2);
\draw[pattern=north west lines] plot[smooth,domain=-3:-1](\x, -\x^-2) --(1,1) -- plot[smooth,domain=1:3] (\x, \x^-2) -- (3,0) -- (-3,0);
\end{tikzpicture}&
\begin{tikzpicture}[scale=0.5]
\draw[thick] (-3,0) -- (3,0);
\draw[thick] (0,-2) -- (0,2);
\draw[color=black, domain=-3:3] plot (\x, 1);
\draw[color=black, domain=-3:3] plot (\x, -1);
\draw[color=black, domain=0.5:1.73] plot (\x^2,\x^-1);
\draw[color=black, domain=0.5:1.73] plot (\x^2,-\x^-1);
\draw[pattern=north west lines] (0,1) --(1,1) -- plot[smooth,domain=1:1.73] (\x^2, \x^-1) -- plot[smooth,domain=1.73:1] (\x^2, -\x^-1) -- (0,-1);
\end{tikzpicture}&
\begin{tikzpicture}[scale=0.5]
\draw[thick] (-3,0) -- (3,0);
\draw[thick] (0,-2) -- (0,2);
\draw[color=black, domain=-3:3] plot (\x, 1);
\draw[color=black, domain=0.5:3] plot (\x,\x^-1);
\draw[color=black, domain=-3:-0.5] plot (\x,\x^-1);
\draw[pattern=north west lines] (0,1) -- plot[smooth,domain=1:3](\x, \x^-1) --(3,0) -- (0,0);
\end{tikzpicture}
\end{tabular}
\end{example}

Like in the part of stability, the results are using exponents and the description is not invariant below affine transformations.

\vspace{9cm}

\begin{remark}
How we have seen in the proof of the theorem for the representations of linear polynomials \ref{lin} and in Schmüdgen's theorem as special case of its homogeneous variant, it is not every time necessary to homogenize to even degree. We can consider the affine case not only as a special case of the projective case, it can also be a special case of the homogeneous case, when the quadratic modules contain $x_0$.\\
I could not resolve the open problem mentioned in \ref{open} yet. The tools of this paper do not seem to work, but probably one can modify the original paper in an appropriate way.
\end{remark}

\newpage

\vspace{1cm}

\noindent Christoph Schulze, student at Universität Leipzig, Germany\\
E-mail address: Schulze.Christoph@t-online.de

\end{document}